\documentclass{article}
\usepackage{amsmath,amssymb,amsfonts,amsthm,latexsym,amstext,amscd}
\usepackage{epsfig,graphics,graphicx,color}
\usepackage{enumerate}
\usepackage{hyperref}
\usepackage{mathdots} %
\usepackage{tikz}
\usetikzlibrary{patterns}
\newcommand{\notiz}[1]{}
\newcommand{\bsx}{\boldsymbol{x}}%
\newcommand{\bsu}{\boldsymbol{u}}%
\newcommand{\bsl}{\boldsymbol{l}}%
\newcommand{\bse}{\boldsymbol{e}}%
\newcommand{\bszero}{\boldsymbol{0}}%

\newcommand{\NN}{\mathbb N}
\newcommand{\ZZ}{\mathbb Z}
\newcommand{\RR}{\mathbb R}
\newcommand{\QQ}{\mathbb Q}
\newcommand{\PP}{\mathbb P}

\newcommand{\Bad}{{\rm Bad}}

\newcommand{\lc}{{\rm lc}}
\newcommand{\FF}{\mathbb{F}}

\newcommand{\Fq}{\mathbb{F}_q}

\newtheorem{theorem}{Theorem}
\newtheorem{RQ}{Research Question}
\newtheorem{lemma}{Lemma}
\newtheorem{remark}{Remark}
\newtheorem{example}{Example}

\newtheorem{coro}{Corollary}
\newtheorem{defi}{Definition}

\newtheorem{algo}{Algorithm}

\begin{document}

\title{Hankel Matrix Decompositions and Digital Sequences over Function Fields}

\author{Roswitha Hofer\thanks{Institute of Financial Mathematics and Applied Number Theory, Johannes Kepler University Linz, Altenbergerstr. 69, 4040 Linz, Austria. e-mail: roswitha.hofer@jku.at}}

\date{\today}
\maketitle
\begin{abstract}
	This paper studies formal Laurent series, with particular emphasis on their continued-fraction and Laurent-series expansions. Knowledge of both expansions is important, for example, in the study of Kronecker-type sequences in uniform distribution theory and in the search for potential counterexamples to variants of Littlewood's conjecture over fields of formal Laurent series.
	
	Our approach investigates Hankel matrices over a field and relates them to Laurent-series expansions over the same field. We describe such matrices via a block $\mathcal{L}\mathcal{B}\mathcal{U}$ decomposition, using recursive algorithms based on the continued-fraction coefficients of the corresponding Laurent series. The upper-triangular factor $\mathcal{U}$ has several remarkable properties. In particular, it is constructed columnwise from the so-called Ostrowski-type representations of powers of $X$. Applying $\mathcal{U}$ in Niederreiter's digital construction yields an Ostrowski-type van der Corput sequence that can be related to the corresponding Kronecker-type sequence. For ordinary Kronecker sequences, such a relation is known in only one case: the sequence $(n\varphi)_{n\geq 0}$, where $\varphi$ denotes the golden ratio, and the associated Zeckendorf van der Corput sequence. Drmota established a relation between the distribution of the latter sequence and that of the $(n\varphi)$-sequence.
	
	The original motivation for our study of Hankel matrices was to recast the search for counterexamples to variants of Littlewood's conjecture over fields of Laurent series as the search for specific Kronecker-type sequences. We hope that the digital-sequence viewpoint will lead to new approaches to, and perspectives on, the search for such counterexamples. This paper also discusses several open problems concerning specific potential counterexamples, provided that they exist. Interestingly, the method developed for the study of Kronecker-type sequences also allows a new perspective on the celebrated Sobol' construction. Our approach yields a short and elegant proof of its excellent distribution properties. Sobol' sequences are among the most popular classes of so-called low-discrepancy sequences and are frequently used in quasi-Monte Carlo (QMC) methods.
\end{abstract}

\noindent{\textbf{Keywords:} 
 formal Laurent series, simple continued fraction, convergents,  block $\mathcal{L}\mathcal{B}\mathcal{U}$-decomposition of Hankel matrices, Ostrowski-type representation, Kronecker-type sequences, variants of the Littlewood conjecture, Sobol' sequences}\\
\noindent{\textbf{MSC2010:} primary 11K38; secondary 11J70, 15B05, 11J61, 65C05.}

\section{Introduction and Motivation}
We recall that every $\alpha\in\RR$ can be represented by its simple continued fraction
$$\alpha=[\alpha]+\cfrac{1}{\alpha_1 + \cfrac{1}{\alpha_2 + \cfrac{1}{\alpha_3 + \dots}}},$$
where $[\alpha]$ denotes the integer part of $\alpha$ and $\alpha_1,\alpha_2,\ldots\in\NN$. It is well known that $\alpha$ is badly approximable, i.e.,
$$\alpha\in\{\beta \in\RR\,:\, \inf_{h\in\ZZ\setminus\{0\}} |h|\cdot\|h\beta\|>0\},$$
where $\|x\|$ denotes the distance from $x$ to the nearest integer, if and only if the coefficients $\alpha_1,\alpha_2,\ldots\in\NN$ in its simple continued-fraction expansion are bounded.\\

In this manuscript, for several reasons, we focus on badly approximable Laurent series $\theta\in\FF((X^{-1}))$. 
In analogy with the number field $\QQ$, we use the rational function field $\FF(X)$, where $\FF$ is an arbitrary field, such as $\QQ$, $\RR$, or a finite field $\Fq$ with $q$ elements. The analogue of the absolute value on $\RR$ is the non-archimedean absolute value $|\,\cdot\,|$, which is defined in terms of the degree valuation $\nu$ as follows. 

For every $f\in\FF(X)$, there exist $p(X),\,q(X)\in\FF[X]$ with $q(X)\neq 0$ such that $f=\frac{p(X)}{q(X)}$. We write $q(X)=\sum_{i=0}^ra_iX^i$, where $a_i\in\FF$ and $a_r\neq 0$. Then the degree $\deg(q(X))$ of $q(X)$ is, by definition, $r$. We set $\deg(0):=-\infty$.  \\
The degree valuation $\nu$ of the rational function $f$ is then defined by
$$\nu(f):=\deg(p(X))-\deg(q(X)),$$
and the corresponding non-archimedean absolute value is given by
$$|f|:=e^{\nu(f)},$$
where $e$ denotes Euler's number.\\ 
The completion of $\FF(X)$ with respect to the absolute value $|\cdot|$ is the field of formal Laurent series, denoted by $\FF((X^{-1}))$. 
Every nonzero element $\theta(X)$ of $\FF((X^{-1}))$ can be written uniquely as
$$\sum_{i=-u}^\infty c_iX^{-i}=[\theta(X)]+\sum_{\substack{i=1\\ i\geq -u}}^\infty c_iX^{-i},$$
where $u\in\ZZ$ and $c_i\in\FF$ for $i\geq -u$, with $c_{-u}\neq 0$. Here,
$$[\theta(X)]=\sum_{i=-u}^{0}c_{i}X^{-i}$$
is the polynomial part of $\theta(X)$, which belongs to $\FF[X]$. Note that the absolute value of $|\theta(X)-[\theta(X)]|$ is smaller than $1$ and that this quantity gives the distance to the nearest polynomial. This distance will be denoted by $\|\cdot\|$ below. The element $\theta(X)-[\theta(X)]$ is often abbreviated to $\langle\theta(X)\rangle$ and called the fractional part of $\theta(X)$. 
The degree valuation $\nu(\theta(X))$ of $\theta(X)$ is $u$, and the absolute value $|\theta(X)|$ of $\theta(X)$ is $e^{u}$.

The set of all possible fractional parts, namely,
$$\{\theta(X)\in\FF((X^{-1})):|\theta(X)|<1\},$$ 
will be denoted by $\overline{\FF((X^{-1}))}$ below.

For a short discussion of simple continued fractions of $\alpha \in \RR$ and of $\theta(x)\in\FF((X^{-1}))$ we refer to the Appendix.

In analogy with badly approximable real numbers, we define badly approximable Laurent series as follows.
\begin{defi}[Badly Approximable Laurent series]\label{def:set_Bad}
	We define the set $\Bad$ of badly approximable Laurent series in $\FF((X^{-1}))$ by
	$$\Bad:=\{ \theta(X) \in\FF((X^{-1})): \inf_{Q(X)\in\FF[X]\setminus\{0\}}|Q(X)|\cdot\| Q(X)\theta(X)\|>0\}.$$
\end{defi}
Note that a Laurent series $\theta(X)$ has a simple continued-fraction expansion
$$\theta(X)=[\theta(X)]+\cfrac{1}{A_1(X) + \cfrac{1}{A_2(X) + \cfrac{1}{A_3(X) + \dots}}}$$
with $\sup_{i\in\NN}\deg A_i(X)<\infty$ if it is in $\Bad$, and conversely.

We are particularly interested in the formal power-series representations of badly approximable Laurent series. 
We would like to point out that, for real numbers, and in particular for badly approximable ones, it is a non-trivial problem to determine both their explicit $b$-adic representation for a fixed base $b$, such as $2$ or $10$, and their simple continued-fraction expansion. So far, there are only a few examples for which explicit information about both representations is available (see, e.g., Shallit \cite{shallit1,shallit2}).

\subsection{The Littlewood conjecture and its variants}

The question of the formal-series expansions of badly approximable Laurent series is relevant, for instance, to the search for potential counterexamples to variants of Littlewood's conjecture over $\FF((X^{-1}))$. These variants over a finite field $\FF_q$ are related to the distribution of so-called Kronecker-type and Kronecker--Halton-type sequences, which form special classes of digital sequences (see, e.g., \cite{Hofer2026} for a recent survey).

The famous Littlewood conjecture, dating back to the 1930s, states that, for $s>1$ and every $(\alpha_1,\ldots,\alpha_s)\in\RR^s$, one has
\begin{equation}\label{eq:Littlewood_real}
	\inf_{(h_1,\ldots,h_s)\in\ZZ^s\setminus\{\boldsymbol{0}\}} \max\{1,|h_1|\}\cdots\max\{1,|h_s|\}  \cdot\|h_1\alpha_1+\cdots+h_s\alpha_s\|=0.
\end{equation}

Its $p$-adic variant \cite{MatTeu2004} asserts that, for every $\alpha\in\RR$ and every prime $p\in\PP$,
\begin{equation}\label{eq:padicLittlewood_real}
	\inf_{h\in\ZZ\setminus\{0\},\,k\geq 0} |h|\,\cdot\,\|hp^k \alpha\|=0.
\end{equation}

Both conjectures \eqref{eq:Littlewood_real} and \eqref{eq:padicLittlewood_real} remain unresolved.

Reformulating these conjectures over a field of Laurent series $\FF((X^{-1}))$ rather than over $\RR$ leads to the following variants of Littlewood's conjecture. See \cite{DavLew1963} for \eqref{eq:Littlewood_series} and \cite{MatTeu2004} for \eqref{eq:padicLittlewood_series}. 

Let $s>1$. Is it true that, for every $(\theta_1,\ldots,\theta_s)\in\FF((X^{-1}))^s$,
\begin{equation}\label{eq:Littlewood_series}
	\inf_{(Q_1,\ldots,Q_s)\in\FF[X]^s\setminus\{\boldsymbol{0}\}} \max\{1,|Q_1|\}\cdots \max\{1,|Q_s|\} \cdot\|Q_1\theta_1+\cdots+Q_s\theta_s\|=0?
\end{equation}  
Is it true that, for every $\theta\in\FF((X^{-1}))$ and every monic irreducible polynomial $p(X)\in\FF[X]$,
\begin{equation}\label{eq:padicLittlewood_series}
	\inf_{Q\in\FF[X]\setminus\{0\},\,k\geq 0} |Q|\,\cdot\,\|Qp(X)^k \theta\|=0?
\end{equation}

Counterexamples to \eqref{eq:Littlewood_series} and \eqref{eq:padicLittlewood_series} are known when the field $\FF$ has characteristic $0$ (see, e.g., \cite{DavLew1963,Baker1964,MatTeu2004,BugMat2008}). In every positive characteristic, the variant \eqref{eq:Littlewood_series} of Littlewood's conjecture remains open.

Counterexamples to \eqref{eq:padicLittlewood_series} have recently been identified in positive odd characteristics. Consequently, the only unresolved case for the validity of the variant \eqref{eq:padicLittlewood_series} of Littlewood's conjecture is that in which the ground field $\FF$ has characteristic $2$. 
The first counterexample to \eqref{eq:padicLittlewood_series} over a finite field was identified in characteristic $3$ in \cite{Adiceametal}, followed by further counterexamples in low odd characteristics \cite{GR}. The restriction to small characteristics is due to the computer-assisted nature of the method. In \cite{AdBad}, it was proved that the counterexample in \cite{Adiceametal} remains a counterexample in every characteristic $p\equiv 3 \pmod{4}$. Finally, \cite{LaiSprang} introduced a comparatively elementary method for proving the existence of a concrete counterexample, generalizing the one in \cite{Adiceametal} and depending on the odd characteristic $p$.

It is worth noting that the proof in \cite{LaiSprang} starts from a simple closed-form expression for the Laurent-series expansion of the counterexamples. Since it immediately follows that all counterexamples to \eqref{eq:Littlewood_series} and \eqref{eq:padicLittlewood_series} are necessarily badly approximable, it is therefore natural to study the series expansions of badly approximable elements in $\FF((X^{-1}))$, in particular in the unresolved cases: \eqref{eq:padicLittlewood_series} in characteristic $2$ and \eqref{eq:Littlewood_series} in any positive characteristic.
	
\begin{remark}\label{rem:CE_BA}
	We emphasize that any potential counterexamples to the variants of Littlewood's conjecture in \eqref{eq:Littlewood_real}, \eqref{eq:padicLittlewood_real}, \eqref{eq:Littlewood_series}, and \eqref{eq:padicLittlewood_series} are also of interest in the theory of uniform distribution. In this context, one studies, for example, the distribution of Kronecker and Kronecker--Halton sequences, as well as their analogues over $\FF_q((X^{-1}))$, in the $s$-dimensional unit cube $[0,1)^s$. We refer the interested reader to, for example, \cite{Hofer2026} for a recent survey.
\end{remark}

\subsection{Hankel matrices, Kronecker-type sequences and digital sequences}

In this paper, we study Hankel matrices $\mathcal{M}_\theta$ defined by the coefficients in the Laurent-series expansion of a series $\theta\in\overline{\FF((X^{-1}))}$.

\begin{defi}[Hankel matrix associated with $\theta$]\label{def:Hankelmatrix}
	Let $\theta\in\overline{\FF((X^{-1}))}$ have Laurent-series expansion $\theta=\sum_{i=1}^\infty c_iX^{-i}$. 
	We define the Hankel matrix $\mathcal{M}_{\theta}=(m_{i,j})_{i,j\geq 0}\in\FF^{\NN_0\times\NN_0}$ associated with $\theta$ by setting $m_{i,j}:=c_{i+j+1}$ for all $i,j\in\NN_0$. 
\end{defi}

A basic fact is that, when one considers the sequence of principal minors of the Hankel matrix $\mathcal{M}_\theta$, all nonzero elements of this sequence can be characterized in terms of the simple continued-fraction expansion of $\theta$.

\begin{lemma}\label{lem:HM}
	Let $\theta\in\overline{\FF((X^{-1}))}$. Let $\mathcal{M}_\theta=(m_{i,j})_{i,j\geq 0}\in\FF^{\NN_0\times\NN_0}$ be the Hankel matrix associated with $\theta$. For $k\in\NN$, denote the submatrix corresponding to the $k$th principal minor of $\mathcal{M}_\theta$ by $\mathcal{M}^{(k)}_\theta=(m_{i,j})_{0\leq i,j< k}$. Then the following two assertions are equivalent. 
	\begin{enumerate}
		\item The matrix $\mathcal{M}^{(k)}_\theta\in\FF^{k\times k}$ is nonsingular.  
		\item The integer $k$ is the degree of the denominator of a best approximation to $\theta=\sum_{j=1}^\infty m_{0,j-1}X^{-j}$.  
	\end{enumerate}
\end{lemma}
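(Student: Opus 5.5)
The plan is to translate nonsingularity of $\mathcal{M}^{(k)}_\theta$ into a statement about polynomial multiples of $\theta$, and then use the standard approximation properties of convergents recalled in the Appendix.

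\emph{Step 1 (kernel vectors as polynomials).} To a vector $(b_0,\ldots,b_{k-1})^\top\in\FF^k$ I associate $Q(X)=\sum_{j=0}^{k-1}b_jX^j$, so $\deg Q<k$. For $i\ge 0$, the coefficient of $X^{-(i+1)}$ in $Q\theta=\sum_j\sum_l b_jc_lX^{j-l}$ is $\sum_{j=0}^{k-1}b_jc_{i+j+1}=\sum_j m_{i,j}b_j$. Hence $\mathcal{M}^{(k)}_\theta b=0$ says exactly that the coefficients of $X^{-1},\ldots,X^{-k}$ in $Q\theta$ vanish. This means the fractional part satisfies $\langle Q\theta\rangle$ has $|\langle Q\theta\rangle|\le e^{-k-1}$, i.e. $\|Q\theta\|<e^{-k}$. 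Consequently, $\mathcal{M}^{(k)}_\theta$ is singular if and only if there exists a nonzero $Q\in\FF[X]$ with $\deg Q<k$ and $\|Q\theta\|<e^{-k}$.

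\emph{Step 2 (facts on convergents).} Let $p_n/q_n$ be the convergents of $\theta$ and set $d_n=\deg q_n$, so $0=d_0<d_1<d_2<\cdots$ and $d_{n+1}-d_n=\deg A_{n+1}$. I will use two facts, from the Appendix or by the usual argument.
\begin{enumerate}
\item[(a)] $\|q_n\theta\|=|q_n\theta-p_n|=e^{-d_{n+1}}$. If $\theta$ is rational with last convergent index $N$, then $\|q_N\theta\|=0$.
\item[(b)] (Best approximation.) For every nonzero $Q$ with $\deg Q<d_{n+1}$ one has $\|Q\theta\|\ge e^{-d_{n+1}}$.
\end{enumerate}
For (b), write $Q$ in terms of the basis $q_n,q_{n-1}$. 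Equivalently, if $d_n\le\deg Q<d_{n+1}$, the ultrametric inequality applied to the expansion $Q=a q_n+ b q_{n-1}$ with polynomials $a,b$ gives $\|Q\theta\|\ge e^{-d_{n+1}}\cdot|a|\ge e^{-d_{n+1}}$. I also identify ``denominators of best approximations'' with the $q_n$, $n\ge1$, as is standard.

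\emph{Step 3 (both directions).} Suppose first that $k=d_{n+1}$ for some $n$. By (b), every nonzero $Q$ with $\deg Q<k$ satisfies $\|Q\theta\|\ge e^{-k}$, so by Step 1 the matrix is nonsingular. Conversely, suppose $k\in\NN$ is not of the form $d_m$. Then either $d_n<k<d_{n+1}$ for some $n$, or $\theta$ is rational and $k>d_N$. In the first case take $Q=q_n$. Then $\deg Q=d_n<k$ and $\|Q\theta\|=e^{-d_{n+1}}\le e^{-k-1}<e^{-k}$, so the matrix is singular. In the second case take $Q=q_N$, for which $\|Q\theta\|=0$, and again the matrix is singular.

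The main obstacle is Step 2(b), the best-approximation inequality for all $Q$ of degree below $d_{n+1}$, not just convergents. If the Appendix does not state it in this form, I will prove it by the unimodular basis argument: since $q_np_{n-1}-p_nq_{n-1}=\pm1$, one can write $Q=aq_n+bq_{n-1}$, $P=ap_n+bp_{n-1}$ and compare the absolute values of the two resulting terms. The rest is bookkeeping with the indices of $\mathcal{M}^{(k)}_\theta$.
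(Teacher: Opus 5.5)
Your proof is correct. The paper gives no argument of its own for this lemma; it only points to the proof of \cite[Lemma~2.3]{Hofer2026}. Your route is the standard self-contained argument: a kernel vector of $\mathcal{M}^{(k)}_\theta$ is the same thing as a nonzero $Q$ with $\deg Q<k$ and $\nu(\langle Q\theta\rangle)\le -k-1$, and the rest follows from $\|q_n\theta\|=e^{-d_{n+1}}$ together with the best-approximation bound. It also handles rational $\theta$ without extra machinery.

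Step~2(b) needs one tightening. The bound $\|Q\theta\|\ge |a|e^{-d_{n+1}}$ does not follow from the ultrametric inequality alone. You also need that $a(q_n\theta-p_n)$ and $b(q_{n-1}\theta-p_{n-1})$ cannot have equal absolute value: equality would force $\deg a=\deg b+e_{n+1}$, hence $\deg Q=\deg b+d_{n+1}\ge d_{n+1}$. The case $a=0$ must be treated separately, since $|a|\ge 1$ fails there; in that case $\|Q\theta\|=|b|e^{-d_n}$, which is large enough.

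You can bypass (b) entirely with results already in the paper. For $0\neq P$ with $\deg P<d_{n+1}$, the $\theta$-Ostrowski digits of $P$ are supported in the blocks $0,\ldots,n$. With the block index $\ell\le n$ and offset $u\ge 0$ from Lemma~\ref{lem:ValZec}, that lemma gives $\nu(\langle P\theta\rangle)=u-d_{\ell+1}\ge -d_{n+1}$, which is exactly (b).

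For irrational $\theta$, Theorem~\ref{prop:1} yields the whole lemma directly. Since $\mathcal{L}_\theta$ and $\mathcal{U}_\theta$ are nonsingular triangular, $\det\mathcal{M}^{(k)}_\theta$ is a nonzero multiple of the $k$th leading principal minor of $\mathcal{I}_\theta=\operatorname{diag}(J_{e_1},J_{e_2},\ldots)$. That minor vanishes exactly when $d_n<k<d_{n+1}$, because the truncated corner of $J_{e_{n+1}}$ then has a zero first row. This factorization route trades your best-approximation inequality for the structure of $\mathcal{I}_\theta$. Your direct argument remains the more elementary one and is not restricted to irrational $\theta$.
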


\begin{proof}
	Cf. the proof of \cite[Lemma~2.3]{Hofer2026}.
\end{proof}

We provide details on Kronecker-type sequences (see Definition~\ref{def:KTseq}) and relate them to the general concept of digital sequences (see Remark~\ref{rem:KTseq}). Such sequences, constructed from counterexamples to \eqref{eq:Littlewood_series} or \eqref{eq:padicLittlewood_series}, are of particular interest because of their excellent uniform distribution properties. 

Kronecker-type sequences were introduced by Niederreiter (see, e.g., \cite{niesiam}) and studied in detail by Larcher and Niederreiter (see \cite{larnie1993,LarNie1993}) as analogues of the ordinary Kronecker sequence
$$((\{n\alpha_1\},\{n\alpha_2\},\ldots,\{n\alpha_s\}))_{n\geq 0},$$
where $(\alpha_1,\alpha_2,\ldots,\alpha_s)\in\RR^s$.

\begin{defi}[Kronecker-type sequence]\label{def:KTseq}
	Let $\FF_q$ be a finite field and let $\theta\in\overline{\Fq((X^{-1}))}$. For $n\in\NN_0$, the $n$th element $x_n$ of the Kronecker-type sequence determined by $\theta$ is obtained as follows:
	\begin{enumerate}
		\item Expand $n$ in base $q$, that is, write $n=\sum_{i=0}^\infty n_iq^i$ with $n_i\in\{0,1,\ldots,q-1\}$, and associate with $n$ the polynomial $n(X)\in\FF_q[X]$ defined by
		$$n(X)=\sum_{i=0}^\infty \eta(n_i)X^i,$$
		where $\eta:\{0,1,\ldots,q-1\}\to\FF_q$ is a fixed bijection satisfying $\eta(0)=0$. 
		
		\item Compute the product $n(X)\theta$ and consider the Laurent-series expansion of its fractional part,
		$$\langle n(X)\theta\rangle=\sum_{i=1}^\infty u_iX^{-i}.$$
		
		\item Finally, set
		$$x_n=\sum_{i=1}^\infty \eta^{-1}(u_i)q^{-i},$$
		which is an element of the interval $[0,1]$. We refer to the resulting sequence as
		$$\bigl([\langle n(X)\theta\rangle]_q\bigr)_{n\geq 0}.$$
	\end{enumerate}
More generally, given $(\theta_1,\ldots,\theta_s)\in
\overline{\Fq((X^{-1}))}^s$, the associated $s$-dimensional Kronecker-type sequence is
$$\bigl(([\langle n(X)\theta_1\rangle]_q,\ldots,[\langle n(X)\theta_s\rangle]_q)\bigr)_{n\geq 0},$$
which is a sequence in $[0,1]^s$.
\end{defi}

\begin{remark}\label{rem:KTseq}
	Note that the Kronecker-type sequence determined by $\theta$ can also be defined using the concept of an $\NN_0\times\NN_0$ generating matrix $\mathcal{M}_\theta$ over $\FF_q$ from Definition~\ref{def:Hankelmatrix}, together with the digital method introduced in \cite{nie1987}. The digital method works as follows. 
	
	We again fix a bijection $\eta:\{0,1,\ldots,q-1\}\to\FF_q$ satisfying $\eta(0)=0$, associate with each index $n\in\NN_0$, expanded in base $q$, the digit vector $\vec{n}=(\eta(n_0),\eta(n_1),\ldots)^T\in\FF_q^{\NN_0}$, and choose a generating matrix
	$$\mathcal{C}:=(c_{i,j})_{i\geq 0,j\geq 0}\in\FF_q^{\NN_0\times\NN_0}.$$
	We then compute
	$$\mathcal{C}\vec{n}=:\vec{y}_n=(y_{n,0},y_{n,1},\ldots)^T\in\FF_q^{\NN_0}.$$
	Finally, the $n$th point of the sequence $(x_n)_{n\geq 0}$ is defined by
	$$x_n:=\sum_{j=0}^\infty \eta^{-1}(y_{n,j})q^{-j-1}.$$
	
	If the generating matrix $\mathcal{C}$ is chosen to be the identity matrix, the resulting sequence is the well-known van der Corput sequence in base $q$ (see also Example~\ref{ex:vdc} in Section~\ref{sec:4}). 
	
	If, instead, the generating matrix $\mathcal{C}$ is chosen to be $\mathcal{M}_\theta$, then the digital method constructs the Kronecker-type sequence
	$$\bigl([\langle n(X)\theta\rangle]_q\bigr)_{n\geq 0},$$
	by Definition~\ref{def:Hankelmatrix} (see, e.g., \cite{LarNie1993,larnie1993}). 
	
	Although the Hankel matrix $\mathcal{M}_\theta$ can be defined over an arbitrary field $\FF$, the construction of a sequence in $[0,1]$ by this method relies on the finiteness of the field $\FF$.  
		
	More generally, an $s$-dimensional digital sequence is generated by
	$s$ matrices
	$\mathcal{C}^{(1)},\ldots,\mathcal{C}^{(s)}$ with
	$\mathcal{C}^{(i)}\in\FF_q^{\NN_0\times\NN_0}$ for $1\leq i\leq s$.
	Writing
	$$
	\mathcal{C}^{(i)}\vec{n}
	=(y_{n,0}^{(i)},y_{n,1}^{(i)},\ldots)^T,
	$$
	its $n$th point is
	$$
	\boldsymbol{x}_n
	=
	\left(
	\sum_{j=0}^{\infty}\eta^{-1}(y_{n,j}^{(1)})q^{-j-1},
	\ldots,
	\sum_{j=0}^{\infty}\eta^{-1}(y_{n,j}^{(s)})q^{-j-1}
	\right)\in[0,1]^s.
	$$
\end{remark}

In the theory of uniform distribution, the discrepancy of multidimensional Kronecker-type sequences is of particular interest. It is known that the $s$-dimensional Kronecker-type sequence
$$
\bigl([\langle n(X)\theta_1\rangle]_q,\ldots,[\langle n(X)\theta_s\rangle]_q\bigr)_{n\geq 0}
$$
has, in a certain sense, optimal distribution properties if $(\theta_1,\ldots,\theta_s)$ is a counterexample to the Littlewood conjecture \eqref{eq:padicLittlewood_series} over a finite field $\FF_q$ (see, e.g., \cite{LarNie1993,larnie1993}). Consequently, the series expansions of badly approximable elements in $\overline{\FF((X^{-1}))}$, where $\FF$ is a finite field, deserve particular attention.

Counterexamples to \eqref{eq:padicLittlewood_series} with $p(X)=X$ and $\FF=\FF_q$ can be used to construct two-dimensional sequences with excellent distribution properties (see \cite{levinLDhybrid}). The first component of such a sequence is the van der Corput sequence in base $q$, while the second is the Kronecker-type sequence associated with the counterexample $\theta$. Such sequences have been studied extensively in \cite{Hofer2026,Robertson}.\\

The remainder of this paper is organized as follows. Sections~\ref{sec:2} and~\ref{sec:3} generalize the results of \cite[Section~2]{Hofer2021} by introducing and analyzing the $\theta$-Ostrowski representation in $\FF[X]$ and an $\mathcal{L}_\theta\mathcal{I}_\theta\mathcal{U}_\theta$ decomposition of the Hankel matrix $\mathcal{M}_\theta$ defined in Definition~\ref{def:Hankelmatrix}. Section~\ref{sec:4} discusses the digital-construction viewpoint on potential counterexamples to the variants of Littlewood's conjecture in \eqref{eq:Littlewood_series} and \eqref{eq:padicLittlewood_series}. In particular, Sections~\ref{sec:42} and~\ref{sec:43} are devoted to these questions, whereas Section~\ref{sec:41} offers a new perspective on the celebrated Sobol' construction by providing a short and elegant proof of its excellent distribution properties.

\section{The $\theta$-Ostrowski expansion of a polynomial in $\FF[X]$}\label{sec:2}

An Ostrowski representation is a unique numeration system for the set of nonnegative integers $\NN_0$ based on the denominators $q_k$ of the convergents $p_k/q_k$ associated with the simple continued-fraction expansion of a fixed irrational number $\alpha=[a_0;a_1,a_2,\ldots]$. Every nonnegative integer $n$ can be written uniquely as
\[
n=\sum_{k=0}^{m}b_kq_k,
\]
where the coefficients $b_k$, which are computed by a greedy algorithm, satisfy the following conditions:

\begin{enumerate}
	\item $0\leq b_0<a_1$;
	\item $0\leq b_k\leq a_{k+1}$ for all $k\geq 1$;
	\item if $b_k$ attains its maximal value for some $k\geq 1$, that is, if $b_k=a_{k+1}$, then the preceding digit satisfies $b_{k-1}=0$.
\end{enumerate}

Every Ostrowski numeration system generalizes the $q$-adic numeration system with fixed base $q\geq 2$, in which every $n\in\NN_0$ has a unique representation
\[
n=\sum_{i=0}^{\infty}n_iq^i,
\qquad n_i\in\{0,1,\ldots,q-1\}.
\]
The $q$-adic representation is useful for analyzing the distribution properties of the van der Corput sequence in base $q$, whereas the Ostrowski representation based on $\alpha$ is used to analyze the discrepancy of Kronecker sequences $(\{n\alpha\})_{n\geq 0}$. Accordingly, it is natural to generalize the concept of an Ostrowski representation of the elements of $\NN_0$ based on $\alpha\in\RR$ to a corresponding Ostrowski representation of the elements of $\FF[X]$ based on a Laurent series $\theta\in\FF((X^{-1}))$.

One aim of this generalization is to transfer existing concepts to analogous settings. A further aim is to make the Ostrowski representation, which is useful for analyzing the sequence $(\{n\alpha\})_{n\geq 0}$, available as an additional perspective on the class of Kronecker-type sequences. The analysis of the distribution properties of Kronecker-type sequences is usually reduced to the rank properties of the associated Hankel matrices (see, e.g., \cite{LarNie1993,larnie1993}).

In what follows, we introduce the concept of an Ostrowski expansion of a polynomial in $\FF[X]$ (see Algorithm~\ref{algo:OstrRep}). Interestingly, the computation of an Ostrowski expansion in $\FF[X]$ can be described by a linear transformation on $\FF[X]$ (see Lemma~\ref{lem:LO_theta:expansion}). Finally, Lemma~\ref{lem:ValZec} explains the motivation for introducing Ostrowski expansions in connection with the variants of Littlewood's conjecture in \eqref{eq:Littlewood_series} and \eqref{eq:padicLittlewood_series}.

Let $\FF$ and $\theta\in\overline{\FF((X^{-1}))}$ be fixed. Let
\[
\left(\frac{P_k(X)}{Q_k(X)}\right)_{k\geq 0}
\]
be the sequence of convergents to $\theta$ associated with the simple continued-fraction expansion
\[
[0;A_1(X),A_2(X),A_3(X),\ldots]
\]
of $\theta$. Note that
\[
\deg (Q_k(X))=:d_k=\sum_{i=1}^{k}e_i,
\qquad e_i:=\deg A_i(X).
\]

We define the \emph{sequence of base polynomials} $\mathcal{B}_\theta$ in $\FF[X]$ associated with $\theta$ using the denominators of the convergents to $\theta$. This sequence of base polynomials will be used to compute the \emph{$\theta$-Ostrowski expansion} of a polynomial in $\FF[X]$.

\begin{defi}[Sequence of base polynomials $\mathcal{B}_\theta$ associated with $\theta$]\label{def:FibPol}
	Let
	\[
	\theta=[0;A_1(X),A_2(X),A_3(X),\ldots]\in\overline{\FF((X^{-1}))}
	\]
	be fixed, and for $k\in\NN_0$ let
	\[
	d_k:=\deg( Q_k(X))=\sum_{i=1}^{k}\deg( A_i(X))=\sum_{i=1}^{k}e_i
	\]
	denote the degree of the denominator $Q_k(X)$ of the $k$th convergent to $\theta$. The sequence of base polynomials
	\[
	\mathcal{B}_{\theta}:=(B_n(X))_{n\geq 0}
	\]
	in $\FF[X]$ is defined as follows. For each $n\in\NN_0$, let $k,v\in\NN_0$ be the unique integers satisfying
	\[
	d_k\leq n<d_{k+1},
	\qquad
	v:=n-d_k<e_{k+1}=\deg( A_{k+1}(X)).
	\]
	We then set
	\[
	B_n(X):=X^vQ_k(X).
	\]
	In particular,
	\[
	\deg( B_n(X))=v+\deg( Q_k(X))=v+d_k=n.
	\]
\end{defi}

Let $P(X)\in\FF[X]\setminus\{0\}$ have degree
\[
\deg (P(X))=r\in\NN_0.
\]
As usual, $P(X)$ can be written in the form
\[
P(X)=\sum_{i=0}^{r}p_iX^i,
\qquad p_i\in\FF\quad(0\leq i\leq r),
\qquad p_r\neq 0.
\]

We can now define the so-called \emph{$\theta$-Ostrowski representation} of $P(X)$ in $\FF[X]$ with respect to the sequence of base polynomials
\[
\mathcal{B}_{\theta}:=(B_n(X))_{n\geq 0}
\]
by means of the following greedy algorithm.

\begin{algo}[$\theta$-Ostrowski representation]\label{algo:OstrRep}
	Let $\theta\in\overline{\FF((X^{-1}))}$ and let $P(X)\in\FF[X]$. If $P(X)=0$, set
	\[
	(z_0,z_1,z_2,\ldots)=(0,0,0,\ldots)\in\FF^{\NN_0}.
	\]
	Otherwise, let $r:=\deg( P(X))\in\NN_0$. Proceed as follows:
	\begin{itemize}
		\item Set $P_r(X):=P(X)$. Choose $z_r\in\FF\setminus\{0\}$ such that
		\[
		P_{r-1}(X):=P_r(X)-z_rB_r(X)
		\]
		satisfies $\deg( P_{r-1}(X))<r$.
		
		\item For $i\in\{1,\ldots,r-1\}$, suppose that $P_i(X)$ has already been defined. If $\deg( P_i(X))<i$, set
		\[
		z_i:=0
		\qquad\text{and}\qquad
		P_{i-1}(X):=P_i(X).
		\]
		Otherwise, choose $z_i\in\FF\setminus\{0\}$ such that
		\[
		P_{i-1}(X):=P_i(X)-z_iB_i(X)
		\]
		satisfies $\deg (P_{i-1}(X))<i$.
		
		\item Finally, set
		\[
		z_0:=P_0(X).
		\]
		Since $\deg (P_0(X))<1$, we have $P_0(X)\in\FF$.
	\end{itemize}
	We call
	\[
	(z_0,z_1,\ldots,z_r,0,0,\ldots)\in\FF^{\NN_0}
	\]
	the \emph{digit vector} of the $\theta$-Ostrowski expansion of the polynomial $P(X)$, i.e. $$P(X)=\sum_{i=0}^{r}z_iB_i(X).$$
\end{algo}
It is immediate from Algorithm~\ref{algo:OstrRep} that the map
\[
(z_0,z_1,z_2,\ldots)\longmapsto \sum_{i=0}^{\infty}z_iB_i(X),
\]
where only finitely many $z_i$ are nonzero, defines a bijection between the finitely supported vectors in $\FF^{\NN_0}$ and the polynomial ring $\FF[X]$.

\begin{defi}[A nonsingular upper triangular (NUT) matrix $\mathcal{U}_\theta$]
	\label{def:U_theta_1}
	Let $\theta\in\overline{\FF((X^{-1}))}$. For each $j\in\NN_0$, let
	\[
	(z_{0,j},z_{1,j},z_{2,j},\ldots)\in\FF^{\NN_0}
	\]
	denote the digit vector of the $\theta$-Ostrowski expansion of $X^j$. We define
	\[
	\mathcal{U}_\theta:=(u_{i,j})_{i,j\geq 0}\in\FF^{\NN_0\times\NN_0}
	\]
	by
	\[
	u_{i,j}:=z_{i,j}.
	\]
	Thus, the $j$th column of $\mathcal{U}_\theta$ is the digit vector of the $\theta$-Ostrowski expansion of $X^j$.
\end{defi}

The following two lemmas are immediate consequences of basic linear algebra.

\begin{lemma}\label{lem:LO_theta:expansion}
	Let $\theta\in\overline{\FF((X^{-1}))}$, and let $\mathcal{U}_\theta$ be the matrix defined in Definition~\ref{def:U_theta_1}. Let
	\[
	P(X)\in\FF[X]\setminus\{0\}
	\]
	have degree $\deg (P(X))=r\in\NN_0$, and write
	\[
	P(X)=\sum_{i=0}^{r}p_iX^i,
	\qquad
	p_i\in\FF\quad(0\leq i\leq r),
	\qquad
	p_r\neq 0.
	\]
	If
	\[
	(z_0,z_1,\ldots,z_r,0,0,\ldots)\in\FF^{\NN_0}
	\]
	denotes the digit vector of the $\theta$-Ostrowski expansion of $P(X)$, then
	\[
	\mathcal{U}_\theta
	\begin{pmatrix}
		p_0\\
		p_1\\
		\vdots\\
		p_r\\
		0\\
		0\\
		\vdots
	\end{pmatrix}
	=
	\begin{pmatrix}
		z_0\\
		z_1\\
		\vdots\\
		z_r\\
		0\\
		0\\
		\vdots
	\end{pmatrix}.
	\]
\end{lemma}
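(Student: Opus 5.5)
The plan is to reduce the claim to two facts: the $\theta$-Ostrowski digit vector depends linearly on the polynomial, and it is unique. First, I record the uniqueness statement already noted after Algorithm~\ref{algo:OstrRep}. Since $\deg(B_n(X))=n$ for every $n$ by Definition~\ref{def:FibPol}, the family $(B_n(X))_{n\geq 0}$ is a basis of $\FF[X]$: it is triangular with respect to the monomial basis, and its leading coefficients are nonzero. Consequently every $P(X)\in\FF[X]$ has exactly one finitely supported vector $(z_i)_{i\geq 0}$ with $P(X)=\sum_i z_iB_i(X)$. I will also check that the greedy algorithm produces such a vector. In each step the chosen $z_i$ cancels the leading coefficient at degree $i$, so the final identity $P(X)=\sum_{i=0}^r z_iB_i(X)$ holds by telescoping $P_r,P_{r-1},\ldots,P_0$. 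Hence the digit vector from the algorithm coincides with the unique coordinate vector of $P(X)$ in the basis $\mathcal{B}_\theta$.

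Second, I use linearity. The map $\phi:\FF[X]\to\FF^{\NN_0}$ sending a polynomial to its coordinate vector in the basis $\mathcal{B}_\theta$ is $\FF$-linear. By Definition~\ref{def:U_theta_1}, the $j$th column of $\mathcal{U}_\theta$ is $\phi(X^j)$. Since $\deg(B_i(X))=i$, the expansion of $X^j$ involves only $B_0,\ldots,B_j$, so $u_{i,j}=0$ for $i>j$. Therefore $\mathcal{U}_\theta$ is upper triangular, and its product with any finitely supported vector is a finite sum. For $P(X)=\sum_{j=0}^r p_jX^j$ this gives
\[
\mathcal{U}_\theta (p_0,\ldots,p_r,0,\ldots)^T=\sum_{j=0}^r p_j\,\phi(X^j)=\phi\Bigl(\sum_{j=0}^r p_jX^j\Bigr)=\phi(P(X)),
\]
which is exactly $(z_0,\ldots,z_r,0,\ldots)^T$. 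The entries beyond index $r$ vanish because $u_{i,j}=0$ for $i>j$ and $j\leq r$.

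There is no real obstacle here. The only point needing care is the identification of the greedy output with the basis coordinates, that is, confirming the telescoping identity and uniqueness. Both follow immediately from the strict degree drop $\deg(P_{i-1}(X))<i$ in each step of the algorithm.
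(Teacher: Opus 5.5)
Your proposal is correct and takes essentially the same approach as the paper, which gives no written proof beyond calling the lemma an immediate consequence of basic linear algebra. Your argument supplies exactly that: $\mathcal{B}_\theta$ is a basis since $\deg B_n(X)=n$, the greedy digits are the coordinates in this basis by telescoping (using $B_0(X)=Q_0(X)=1$ for the final digit $z_0=P_0(X)$), and the coordinate map is linear with $\phi(X^j)$ as the $j$th column of $\mathcal{U}_\theta$.
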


\begin{lemma}\label{lem:inver_U_theta}
	Let $\theta\in\overline{\FF((X^{-1}))}$, and let $\mathcal{U}_\theta$ be the matrix defined in Definition~\ref{def:U_theta_1}. Then its inverse
	\[
	\mathcal{U}_\theta^{-1}=:\mathcal{V}_\theta
	=(v_{i,j})_{i,j\geq 0}
	\]
	is also a nonsingular upper triangular matrix. Moreover, $\mathcal{V}_\theta$ can be constructed columnwise from the sequence of base polynomials
	\[
	\mathcal{B}_\theta=(B_n(X))_{n\geq 0}.
	\]
	Indeed, writing
	\[
	B_j(X)=\sum_{i=0}^{j}b_{j,i}X^i,
	\]
	we have
	\[
	\mathcal{V}_\theta=(v_{i,j})_{i,j\geq 0}
	=(b_{j,i})_{i,j\geq 0}.
	\]
\end{lemma}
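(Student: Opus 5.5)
The plan is to define $\mathcal{V}_\theta:=(b_{j,i})_{i,j\geq 0}$ directly from the base polynomials and show that it is a two-sided inverse of $\mathcal{U}_\theta$. Upper triangularity and nonsingularity of $\mathcal{V}_\theta$ come for free. By Definition~\ref{def:FibPol}, $\deg B_j(X)=j$, so $b_{j,i}=0$ for $i>j$, and $v_{j,j}=b_{j,j}$ is the leading coefficient of $B_j$, hence nonzero. Likewise $\mathcal{U}_\theta$ is upper triangular with nonzero diagonal: Algorithm~\ref{algo:OstrRep} applied to $X^j$ starts with $r=j$ and $z_{j,j}\neq 0$, and all later digits are $z_{i,j}$ with $i\leq j$. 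This justifies the name NUT.

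The core step is a coordinate interpretation of the two matrices. Identify each finitely supported vector $(a_0,a_1,\ldots)^T$ with the polynomial $\sum_i a_iX^i$. Column $j$ of $\mathcal{V}_\theta$ is then the coefficient vector of $B_j(X)$. So for a finitely supported digit vector $\vec z$ we get
\[
\mathcal{V}_\theta\vec z=\text{coefficient vector of }\sum_j z_jB_j(X).
\]
All sums here are finite by triangularity, so the infinite matrix products make sense.

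Next, compute $\mathcal{V}_\theta\mathcal{U}_\theta$ column by column. Column $j$ of $\mathcal{U}_\theta$ is the digit vector of $X^j$, and Algorithm~\ref{algo:OstrRep} gives $X^j=\sum_i z_{i,j}B_i(X)$. Therefore column $j$ of $\mathcal{V}_\theta\mathcal{U}_\theta$ is the coefficient vector of $X^j$, which is the $j$th unit vector. Hence $\mathcal{V}_\theta\mathcal{U}_\theta=I$.

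For the other product $\mathcal{U}_\theta\mathcal{V}_\theta$, column $j$ equals $\mathcal{U}_\theta$ applied to the coefficient vector of $B_j$. By Lemma~\ref{lem:LO_theta:expansion}, this is the digit vector of the $\theta$-Ostrowski expansion of $B_j(X)$. It remains to see that this digit vector is $e_j$. That follows from the bijectivity of $(z_i)\mapsto\sum z_iB_i$ noted after Algorithm~\ref{algo:OstrRep}: the representation $B_j=1\cdot B_j$ is the unique digit vector. Alternatively, one can check directly that the greedy algorithm gives $z_j=1$, since $B_j-1\cdot B_j=0$, and all remaining digits are $0$.

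So $\mathcal{U}_\theta\mathcal{V}_\theta=I$ as well. The inverse of a nonsingular upper triangular infinite matrix is unique, which follows by solving the triangular system column by column. Hence $\mathcal{V}_\theta=\mathcal{U}_\theta^{-1}$ and has the stated form.

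The only delicate point is bookkeeping: the index transposition $v_{i,j}=b_{j,i}$, so that columns of $\mathcal{V}_\theta$ correspond to the $B_j$, and the legitimacy of infinite products. The latter is handled by the finite support of every column in upper triangular matrices.
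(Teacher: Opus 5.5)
Your proof is correct and is the change-of-basis argument the paper has in mind. The paper gives no written proof, only stating that this lemma is an immediate consequence of basic linear algebra, and your two-sided verification fills in exactly that linear algebra: it uses $\deg B_j(X)=j$, the expansion $X^j=\sum_i z_{i,j}B_i(X)$, and Lemma~\ref{lem:LO_theta:expansion} together with uniqueness of the digit vector of $B_j(X)$.
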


The next lemma establishes an important and useful property of the $\theta$-Ostrowski representation of a polynomial $P(X)\in\FF[X]$. Recall that
\[
\|P(X)\theta\|=e^{\nu(\langle P(X)\theta\rangle)}.
\]

\begin{lemma}\label{lem:ValZec}
	Let $\theta\in\overline{\FF((X^{-1}))}$, and let
	$P(X)\in\FF[X]\setminus\{0\}$ have associated digit vector
	\[
	(z_0,z_1,z_2,\ldots,z_r,0,0,\ldots)\in\FF^{\NN_0}
	\]
	in the $\theta$-Ostrowski representation. Then
	\[
	\nu\bigl(\langle P(X)\theta\rangle\bigr)\leq -d_k-1,
	\]
	where $k\in\NN_0$ is the smallest index such that
	\[
	(z_{d_k},z_{d_k+1},\ldots,z_{d_{k+1}-1})
	\neq (0,0,\ldots,0).
	\]
	Let
	\[
	u\in\{0,1,\ldots,\deg( A_{k+1}(X))-1\}
	\]
	be maximal such that
	\[
	z_{d_k+u}\neq 0.
	\]
	Then
	\[
	\nu\bigl(\langle P(X)\theta\rangle\bigr)
	=-d_{k+1}+u.
	\]
\end{lemma}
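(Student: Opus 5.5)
The plan is to compute the valuation of $\langle B_n(X)\theta\rangle$ for each individual base polynomial, and then use linearity of the fractional part together with the ultrametric inequality. Throughout I use that every partial quotient $A_i(X)$, $i\geq 1$, has degree $e_i\geq 1$. Hence the index blocks $\{d_j,\ldots,d_{j+1}-1\}$ are nonempty and partition $\NN_0$.

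\emph{Step 1 (single base polynomials).} I invoke the standard convergent estimate from the theory of continued fractions over $\FF((X^{-1}))$ (see the Appendix):
\[
|Q_j(X)\theta-P_j(X)|=|Q_{j+1}(X)|^{-1},\qquad\text{i.e.}\qquad \nu\bigl(Q_j\theta-P_j\bigr)=-d_{j+1}.
\]
Let $n=d_j+v$ with $0\leq v<e_{j+1}$, so that $B_n=X^vQ_j$. Then $X^vQ_j\theta-X^vP_j$ has valuation $v-d_{j+1}\leq e_{j+1}-1-d_{j+1}=-d_j-1<0$. Since $X^vP_j$ is a polynomial, this difference is exactly the fractional part, and therefore
\[
\nu\bigl(\langle B_n\theta\rangle\bigr)=v-d_{j+1}.
\]
In particular, the valuations within one block are pairwise distinct. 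All of them lie in the range $[-d_{j+1},\,-d_j-1]$, and these ranges are disjoint for different $j$.

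\emph{Step 2 (linearity).} The map $\langle\cdot\rangle$ is $\FF$-linear, so $\langle P\theta\rangle=\sum_i z_i\langle B_i\theta\rangle$. By the choice of $k$, only indices $i\geq d_k$ contribute. Every contributing term has valuation at most $-d_k-1$, and the ultrametric inequality then gives the first claim $\nu(\langle P\theta\rangle)\leq -d_k-1$.

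\emph{Step 3 (exact value).} Within block $k$, the term with index $d_k+u$ has valuation $-d_{k+1}+u$. The other nonzero terms of block $k$ have smaller $v$, and hence strictly smaller valuation. Terms from blocks $j>k$ have valuation at most $-d_j-1\leq -d_{k+1}-1<-d_{k+1}+u$. So the term $z_{d_k+u}\langle B_{d_k+u}\theta\rangle$, which has $z_{d_k+u}\neq 0$, strictly dominates. By the strict ultrametric property, $\nu(\langle P\theta\rangle)=-d_{k+1}+u$.

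The main obstacle is Step 1, namely the exact equality $\nu(Q_j\theta-P_j)=-d_{j+1}$, rather than merely an upper bound. I would derive it from the identity $Q_j\theta-P_j=(-1)^j/(Q_j\theta_{j+1}+Q_{j-1})$, where $\theta_{j+1}$ is the complete quotient, whose valuation is $e_{j+1}$. Everything after Step 1 is bookkeeping with disjoint valuation ranges.
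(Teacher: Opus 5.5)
Your proof is correct and follows the paper's argument essentially step for step: you obtain the exact valuation $v-d_{j+1}$ of $\langle X^vQ_j\theta\rangle$ from the convergent property, and then dominate strictly over the blocks using the ultrametric inequality. The differences are cosmetic. You justify explicitly that $X^vP_j$ is the polynomial part, and you derive $\nu(Q_j\theta-P_j)=-d_{j+1}$ from the complete-quotient identity, where the paper simply cites it. You also get the bound $-d_k-1$ directly, whereas the paper deduces it from the exact value and $u\leq e_{k+1}-1$.
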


\begin{proof}
	For every $k\in\NN_0$, the approximation property of the convergents
	implies
	\[
	\nu\bigl(\langle Q_k(X)\theta\rangle\bigr)=-d_{k+1}.
	\]
	Since
	\[
	B_{d_k+v}(X)=X^vQ_k(X),
	\qquad
	0\leq v<e_{k+1}:=\deg( A_{k+1}(X)),
	\]
	we obtain
	\[
	\nu\bigl(\langle B_{d_k+v}(X)\theta\rangle\bigr)
	=v-d_{k+1}.
	\]
	
	By the definition of the $\theta$-Ostrowski representation,
	\[
	P(X)=\sum_{n=0}^{r}z_nB_n(X).
	\]
	Let $k$ be the smallest index for which the block
	\[
	(z_{d_k},z_{d_k+1},\ldots,z_{d_{k+1}-1})
	\]
	is nonzero, and let $u$ be maximal such that
	$z_{d_k+u}\neq 0$. The corresponding term has valuation
	\[
	\nu\bigl(z_{d_k+u}\langle B_{d_k+u}(X)\theta\rangle\bigr)
	=u-d_{k+1}.
	\]
	
	All nonzero terms from the same block have smaller valuation, since their
	indices are less than $d_k+u$. If $n$ belongs to a later block, say
	$n=d_\ell+v$ with $\ell\geq k+1$ and $0\leq v<e_{\ell+1}$, then
	\[
	\nu\bigl(\langle B_n(X)\theta\rangle\bigr)
	=v-d_{\ell+1}
	\leq -d_\ell-1
	\leq -d_{k+1}-1.
	\]
	Thus the term corresponding to $z_{d_k+u}B_{d_k+u}(X)$ has strictly
	largest valuation among all nonzero terms. Since $\nu$ is
	non-Archimedean, it follows that
	\[
	\nu\bigl(\langle P(X)\theta\rangle\bigr)=u-d_{k+1}.
	\]
	
	Finally, since $u\leq e_{k+1}-1$ and
	$d_{k+1}=d_k+e_{k+1}$, we obtain
	\[
	\nu\bigl(\langle P(X)\theta\rangle\bigr)
	=u-d_{k+1}
	\leq e_{k+1}-1-d_{k+1}
	=-d_k-1.
	\]
	This proves the lemma.
\end{proof}

\section{A block-$\mathcal{L}_\theta\mathcal{I}_\theta\mathcal{U}_\theta$ decompositions of a Hankel matrix $\mathcal{M}_\theta$  associated with $\theta\in\FF((X^{-1}))$}\label{sec:3}

In this section, we introduce a block-$\mathcal{L}_\theta\mathcal{I}_\theta
\mathcal{U}_\theta$ decomposition of the Hankel matrix $\mathcal{M}_\theta$
associated with $\theta\in\overline{\FF((X^{-1}))}$. In doing so, we extend
the results of \cite{Hofer2021}, which apply to a special subclass of
$\overline{\FF((X^{-1}))}$, to arbitrary $\theta\in\overline{\FF((X^{-1}))}$. The
results in \cite{Hofer2021} apply to those $\theta$ for which all partial
quotients in the continued-fraction expansion have degree one. This case
may be regarded as the extremal case within the set $\Bad$ defined in
Definition~\ref{def:set_Bad}, since it corresponds to the largest possible
value of
\[
\inf_{Q(X)\in\FF[X]\setminus\{0\}}
|Q(X)|\,\bigl\|Q(X)\theta(X)\bigr\|.
\]
However, restricting to this special class of $\theta$ may not be useful
when searching for counterexamples to \eqref{eq:Littlewood_series} and
\eqref{eq:padicLittlewood_series}; see, for example, the discussion in
\cite[Section~7.1]{Adiceametal}.

While the result in \cite{Hofer2021} yields a simple
$\mathcal{L}_\theta\mathcal{U}_\theta$ decomposition in the special case
described above, the Hankel matrix $\mathcal{M}_\theta$ associated with an
arbitrary $\theta$, as defined in Definition~\ref{def:Hankelmatrix},
requires an additional block-diagonal matrix $\mathcal{I}_\theta$ besides
the nonsingular upper- and lower-triangular matrices
$\mathcal{U}_\theta$ and $\mathcal{L}_\theta$. We denote this
block-diagonal matrix by $\mathcal{I}_\theta$ in order to avoid ambiguity
with Definition~\ref{def:FibPol}.

In accordance with \cite[Theorem~1, item~2]{Hofer2021}, we show that the
nonsingular upper-triangular matrix $\mathcal{U}_\theta$ coincides with the
matrix introduced in Definition~\ref{def:U_theta_1}, whose columns encode
the $\theta$-Ostrowski representations of the monomials $X^j$; see
Lemma~\ref{lem:LO_theta:expansion}. Furthermore, we introduce a simple
recursive algorithm for constructing $\mathcal{U}_\theta$ columnwise and
$\mathcal{L}_\theta$ rowwise. This algorithm is based on the thin matrices
$\mathcal{R}_\theta$ and $\mathcal{I}_\theta$, which are determined by the
simple continued-fraction expansion of $\theta$.

The motivation for introducing block-$\mathcal{L}_\theta\mathcal{I}_\theta
\mathcal{U}_\theta$ decompositions of Hankel matrices is the connection
between putative counterexamples $(\theta_1,\ldots,\theta_s)$ to
\eqref{eq:Littlewood_series} and a specific rank structure of the associated
Hankel matrices $\mathcal{M}_{\theta_1},\ldots,\mathcal{M}_{\theta_s}$; see
Remark~\ref{rem:CE_tvalue} in Section~\ref{sec:42}. Using scrambling
arguments from the digital method, this rank condition can be reduced to a
condition on the matrices
$\mathcal{U}_{\theta_1},\ldots,\mathcal{U}_{\theta_s}$; see, for example,
\cite{owen1997scrambled}, \cite{faure2002another}, and
Lemma~\ref{lem:scrambling} in Section~\ref{sec:41}. Thus, understanding
these matrices provides a new perspective on \eqref{eq:Littlewood_series}.

We first define the matrices $\mathcal{R}_{\theta}$ and $\mathcal{I}_\theta$ before explaining the construction of $\mathcal{U}_\theta$ and $\mathcal{L}_\theta$ in Definition~\ref{def:mat_UandL}.

\begin{defi}[Matrices $\mathcal{R}_{\theta}$ and $\mathcal{I}_{\theta}$ associated with $\theta$]\label{def:mat_RandI}
	
	We expand $\theta\in\overline{\FF((X^{-1}))}$ into its simple continued fraction
	$$\theta=[0;A_1(X),A_2(X),A_3(X),\ldots].$$ 
	For $k\in\NN$, we write as usual $$A_k(X)=a_{k,0}X^0+a_{k,1}X^1+\cdots+a_{k,\deg(A_k(X))}X^{\deg(A_k(X))},$$ and we set $e_k:=\deg(A_k(X))$ and $\lc(A_k(X)):=a_{k,e_k}$. 
	We define the matrix $\mathcal{R}_{\theta}\in\FF^{\NN_0\times\NN_0}$ associated with $\theta$ column by column, with indices starting from $0$.
	
	To determine the $n$th column for $n\in\NN_0$, we write $n=d_k+v$, where $k\in\NN_0$ is maximal such that $d_k\leq n$. This implies $0\leq v<e_{k+1}$. The $n$th column is then given by the $\theta$-Ostrowski representation of $X\cdot X^{v}Q_k(X)$. More precisely:
	\begin{itemize}
		\item[-] If $v\in\{0,\dots,e_{k+1}-2\}$, the $n$th column is given by 
		$$(\underbrace{0,0,\dots,0}_{n+1 \text{ entries}},1,0,0,\ldots)^T,$$
		\item[-] and if $v=e_{k+1}-1$, the $n$th column is determined by the $\theta$-Ostrowski representation of $X^{e_{k+1}}Q_k(X)$, which can be expressed as 
		\begin{align*}
			X^{e_{k+1}}Q_k(X)=&\lc(A_{k+1}(X))^{-1}\Big(Q_{k+1}(X)\\
			&-a_{k+1,e_{k+1}-1}X^{e_{k+1}-1}Q_{k}(X)\\
			&-a_{k+1,e_{k+1}-2}X^{e_{k+1}-2}Q_{k}(X)\\
			&-\cdots\\
			&-a_{k+1,0}X^{0}Q_{k}(X)\\
			&-Q_{k-1}(X)\Big).
		\end{align*}
		Consequently, the corresponding column of $\mathcal{R}_{\theta}$ is $\lc(A_{k+1}(X))^{-1}$ times
		$$(\underbrace{0,\dots,0}_{d_{k-1} \text{ entries}},-1,\underbrace{0,\dots,0}_{e_{k}-1 \text{ entries}},-a_{k+1,0},\dots,-a_{k+1,{e_{k+1}}-1},1,0,0,\ldots)^T.$$
		Note that if $k=0$, the initial sequence $0,\dots,0,-1,0,\dots,0$ does not appear. 
	\end{itemize}
	The matrix $\mathcal{I}_{\theta}$ is a block-diagonal matrix composed of a sequence of diagonal blocks, i.e., $\mathcal{I}_{\theta}=\operatorname{diag}(J_{e_{1}},J_{e_{2}},J_{e_{3}},\ldots)$, where $J_u$ (for $u\in\NN$) denotes the $u\times u$ anti-diagonal matrix:
	$$J_u=
	\begin{pmatrix}
		0      &  \dots  &	0   &    1 \\
		\vdots	&  \iddots &    \iddots  &   0\\
		0	&  1&   \iddots      &   \vdots\\
		1      & 0 & \dots   &  0  
	\end{pmatrix}
	\in \mathbb{F}^{u \times u}.$$ 
\end{defi}

\begin{defi}[Matrices $\mathcal{U}_{\theta}$ and $\mathcal{L}_{\theta}$ associated with $\theta$]\label{def:mat_UandL}
	
	Using the matrices $$\mathcal{R}_{\theta}, \quad \mathcal{I}_{\theta}, \quad \mbox{and} \quad \overline{\mathcal{R}}_\theta:=\mathcal{I}_{\theta}\mathcal{R}_{\theta}\mathcal{I}_{\theta},$$ we recursively construct two matrices: $\mathcal{U}_{\theta}$ columnwise and $\mathcal{L}_{\theta}$ rowwise.
	
	The matrix $\mathcal{U}_{\theta}\in\FF^{\NN_0\times\NN_0}$ is defined as follows. Its $0$th column is $\bsu_0=(1,0,0,0,\ldots)^T$, and, for $n\in\NN$, its $n$th column is given by
	$$\bsu_{n}=\mathcal{R}_{\theta}\bsu_{n-1}=\mathcal{R}_{\theta}^{n}\bsu_0.$$
	
	The matrix $\mathcal{L}_{\theta}$ is defined row by row. Its $0$th row is $\bsl_0=(1,0,0,0,\ldots)$, and, for $n\in\NN$, its $n$th row is given by
	$$\bsl_{n}=\bsl_{n-1}\overline{\mathcal{R}}_{\theta}=\bsl_0\overline{\mathcal{R}}_{\theta}^{n}=\bsl_0\mathcal{I}_{\theta}\mathcal{R}_{\theta}^{n}\mathcal{I}_{\theta}.$$
	
\end{defi}

We collect some nice as well as somehow astonishing properties of the matrices $\mathcal{R}_{\theta}$, $\mathcal{U}_{\theta}$, $\mathcal{L}_{\theta}$, and $\mathcal{I}_{\theta}$. We start with Lemma~\ref{lem:R_block} that immediately follows from Definition~\ref{def:mat_RandI}. 

\begin{lemma}\label{lem:R_block}
	The matrices $\mathcal{R}_{\theta}$ and
	$\overline{\mathcal{R}}_{\theta}$ are block tridiagonal. More
	precisely,
	\[
	\mathcal{R}_{\theta}=
	\begin{pmatrix}
		D_{1,1}&U_{1,2}&0&0&0&0&0&\cdots\\
		L_{2,1}&D_{2,2}&U_{2,3}&0&0&0&0&\cdots\\
		0&L_{3,2}&D_{3,3}&U_{3,4}&0&0&0&\cdots\\
		0&0&L_{4,3}&D_{4,4}&U_{4,5}&0&0&\cdots\\
		0&0&0&\ddots&\ddots&\ddots&0&\cdots
	\end{pmatrix}
	\]
	and
	\[
	\overline{\mathcal{R}}_{\theta}=
	\begin{pmatrix}
		\overline{D}_{1,1}&\overline{U}_{1,2}&0&0&0&0&0&\cdots\\
		\overline{L}_{2,1}&\overline{D}_{2,2}&\overline{U}_{2,3}&0&0&0&0&\cdots\\
		0&\overline{L}_{3,2}&\overline{D}_{3,3}&\overline{U}_{3,4}&0&0&0&\cdots\\
		0&0&\overline{L}_{4,3}&\overline{D}_{4,4}&\overline{U}_{4,5}&0&\cdots\\
		0&0&0&\ddots&\ddots&\ddots&0&\cdots
	\end{pmatrix},
	\]
	where
	\[
	D_{k,k},\overline{D}_{k,k}\in\FF^{e_k\times e_k},
	\qquad
	L_{k+1,k},\overline{L}_{k+1,k}
	\in\FF^{e_{k+1}\times e_k},
	\]
	and
	\[
	U_{k,k+1},\overline{U}_{k,k+1}
	\in\FF^{e_k\times e_{k+1}}.
	\]
	More specifically,
	\[
	U_{k,k+1}=
	\begin{pmatrix}
		0&\cdots&0&\blacksquare\\
		0&\cdots&0&0\\
		\vdots&&\vdots&\vdots\\
		0&\cdots&0&0
	\end{pmatrix}
	\in\FF^{e_k\times e_{k+1}},
	\qquad
	\overline{U}_{k,k+1}=
	\begin{pmatrix}
		0&0&\cdots&0\\
		\vdots&\vdots&&\vdots\\
		0&0&\cdots&0\\
		\blacksquare&0&\cdots&0
	\end{pmatrix}
	\in\FF^{e_k\times e_{k+1}},
	\]
	\[
	D_{k,k}=
	\begin{pmatrix}
		0&\cdots&0&\star\\
		\blacksquare&\ddots&\vdots&\vdots\\
		\vdots&\ddots&0&\star\\
		0&\cdots&\blacksquare&\star
	\end{pmatrix}
	\in\FF^{e_k\times e_k},
	\qquad
	\overline{D}_{k,k}=
	\begin{pmatrix}
		\star&\blacksquare&\cdots&0\\
		\star&0&\ddots&0\\
		\vdots&\ddots&\ddots&\blacksquare\\
		\star&0&\cdots&0
	\end{pmatrix}
	\in\FF^{e_k\times e_k},
	\]
	and
	\[
	L_{k+1,k}=
	\begin{pmatrix}
		0&\cdots&0&\blacksquare\\
		0&\cdots&0&0\\
		\vdots&&\vdots&\vdots\\
		0&\cdots&0&0
	\end{pmatrix}
	\in\FF^{e_{k+1}\times e_k},
	\qquad
	\overline{L}_{k+1,k}=
	\begin{pmatrix}
		0&0&\cdots&0\\
		\vdots&\vdots&&\vdots\\
		0&0&\cdots&0\\
		\blacksquare&0&\cdots&0
	\end{pmatrix}
	\in\FF^{e_{k+1}\times e_k}.
	\]
	Here, $\blacksquare$ denotes a nonzero element of $\FF$, whereas
	$\star$ denotes an element of $\FF$.
\end{lemma}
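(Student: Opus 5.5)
We will read the block structure directly off the column description of $\mathcal{R}_\theta$ in Definition~\ref{def:mat_RandI}, and then transfer it to $\overline{\mathcal{R}}_\theta$ by conjugating with the involution $\mathcal{I}_\theta$. We partition the index set $\NN_0$ into consecutive blocks $\{d_{k-1},\ldots,d_k-1\}$ of length $e_k$, $k\in\NN$. We write column index $n=d_k+v$ with $0\le v<e_{k+1}$, so that $n$ lies in block $k+1$.

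\textbf{Columns with $v\le e_{k+1}-2$.} Here the $n$th column is the unit vector $\bse_{n+1}$. Since $n+1\le d_{k+1}-1$, it stays in block $k+1$. These columns therefore contribute only to $D_{k+1,k+1}$, namely the subdiagonal entries $1=\blacksquare$ at positions $(v+1,v)$ inside the block.

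\textbf{The last column $v=e_{k+1}-1$.} Its support, read from the explicit vector in Definition~\ref{def:mat_RandI}, consists of three parts:
\begin{itemize}
\item[(i)] The entry $-\lc(A_{k+1})^{-1}\neq0$ at row $d_{k-1}$. This is the first row of block $k$, so it gives the single nonzero top-right entry of $U_{k,k+1}$.
\item[(ii)] The entries $-\lc(A_{k+1})^{-1}a_{k+1,i}$ at rows $d_k+i$, $0\le i\le e_{k+1}-1$. These fill the last column $\star$ of $D_{k+1,k+1}$.
\item[(iii)] The entry $\lc(A_{k+1})^{-1}\neq0$ at row $d_{k+1}$. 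This is the first row of block $k+2$, so it gives the top-right $\blacksquare$ of $L_{k+2,k+1}$.
\end{itemize}
When $k=0$, part (i) is absent, which matches the absence of $U_{0,1}$. Nothing else is nonzero, so $\mathcal{R}_\theta$ is block tridiagonal with exactly the patterns stated for $U$, $D$, and $L$.

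\textbf{Passing to $\overline{\mathcal{R}}_\theta$.} Because $\mathcal{I}_\theta=\operatorname{diag}(J_{e_1},J_{e_2},\ldots)$ is block diagonal with the same partition, $\overline{\mathcal{R}}_\theta=\mathcal{I}_\theta\mathcal{R}_\theta\mathcal{I}_\theta$ has blocks $\overline{X}_{i,j}=J_{e_i}X_{i,j}J_{e_j}$. Hence it is again block tridiagonal.

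Multiplying by $J$ on both sides reverses row order and column order, which is a $180^\circ$ rotation of each block. Under this rotation:
\begin{itemize}
\item the top-right entry of $U_{k,k+1}$ and of $L_{k+1,k}$ becomes the bottom-left entry, giving the patterns of $\overline{U}$ and $\overline{L}$;
\item the subdiagonal of $D_{k,k}$ becomes the superdiagonal, and its last column becomes the first column, giving $\overline{D}_{k,k}$.
\end{itemize}

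The only real obstacle is index bookkeeping. One must check that row $d_{k-1}$ is indeed the first index of block $k$ and row $d_{k+1}$ the first index of block $k+2$, and that degenerate cases behave correctly: for $e_{k+1}=1$ the block $D_{k+1,k+1}$ is the single entry $\star$, and $k=0$ must be handled separately. No deeper argument is needed.
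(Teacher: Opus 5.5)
Your proposal is correct and takes the same route as the paper, which only asserts that the lemma follows immediately from Definition~\ref{def:mat_RandI}. Your column-by-column reading of the supports (rows $n+1$, $d_{k-1}$, $d_k,\ldots,d_k+e_{k+1}-1$, and $d_{k+1}$), together with the observation that conjugating by the block-diagonal involution $\mathcal{I}_\theta$ rotates each block by $180^\circ$, is exactly the bookkeeping the paper leaves implicit.
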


The objects introduced in this section, together with
Lemma~\ref{lem:R_block}, allow us to generalize the statement of
\cite[Theorem~1]{Hofer2021} as follows.

\begin{theorem}\label{prop:1}
	Let $\theta\in\overline{\FF((X^{-1}))}$. Then the following
	statements hold:
	\begin{enumerate}
		\item The matrix $\mathcal{L}_{\theta}$ defined in
		Definition~\ref{def:mat_UandL} is a nonsingular lower triangular
		matrix, and $\mathcal{U}_{\theta}$ is a nonsingular upper triangular
		matrix. 
		
		\item The $j$th column of $\mathcal{U}_{\theta}$, as defined in
		Definition~\ref{def:mat_UandL}, consists of the coefficients in the
		$\theta$-Ostrowski representation of $X^j$. More precisely, if
		\[
		X^j=\sum_{n=0}^{j}z_nB_n(X),
		\] with $
		(B_n(X))_{n\geq 0}=\mathcal{B}_{\theta}$,
		given in Definition~\ref{def:FibPol}, then
		\[
		\bsu_j=(z_0,z_1,z_2,\ldots,z_j,0,\ldots)^{\mathsf T}.
		\]
		Consequently, $\mathcal{U}_{\theta}$ coincides with the matrix
		associated with $\theta$ in Definition~\ref{def:U_theta_1}.
		
		\item The product
		\[
		\mathcal{L}_{\theta}\mathcal{I}_{\theta}\mathcal{U}_{\theta}
		\]
		is a Hankel matrix.
		
		\item The product
		\[
		\lc(A_1(X))^{-1}
		\mathcal{L}_{\theta}\mathcal{I}_{\theta}\mathcal{U}_{\theta}
		\]
		is the Hankel matrix $\mathcal{M}_{\theta}$ associated with
		$\theta$.
	\end{enumerate}
\end{theorem}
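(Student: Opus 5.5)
The plan is to interpret $\mathcal{R}_\theta$ as the matrix of multiplication by $X$ in the basis $\mathcal{B}_\theta$, and then to reduce all four items to statements about powers of $\mathcal{R}_\theta$, using $\mathcal{I}_\theta^2=\mathrm{Id}$.

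\emph{Step 1: $\mathcal{R}_\theta$ represents multiplication by $X$.} By Definition~\ref{def:mat_RandI}, the $n$th column of $\mathcal{R}_\theta$ is the $\theta$-Ostrowski digit vector of $XB_n(X)$.
\begin{itemize}
\item For $v\le e_{k+1}-2$ this is immediate, since $XB_{d_k+v}=B_{d_k+v+1}$.
\item For $v=e_{k+1}-1$ it follows from the recursion $Q_{k+1}=A_{k+1}Q_k+Q_{k-1}$, solved for $X^{e_{k+1}}Q_k$, where every polynomial on the right-hand side is one of the $B_m$.
\end{itemize}
Since $P\mapsto$ (digit vector of $P$) is linear and bijective, the digit vector of $XP$ equals $\mathcal{R}_\theta$ times the digit vector of $P$. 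The digit vector of $X^0=B_0$ is $\bsu_0$, so induction on $j$ shows that $\bsu_j=\mathcal{R}_\theta^j\bsu_0$ is the digit vector of $X^j$. This proves item 2, and item 1 for $\mathcal{U}_\theta$ follows because the digit vector of $X^j$ has support in $\{0,\dots,j\}$ with $z_j\neq0$ (Algorithm~\ref{algo:OstrRep}, $\deg X^j=j$).

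\emph{Step 2: triangularity of $\mathcal{L}_\theta$.} This is the step I expect to need the most care, since it depends on the precise block shapes in Lemma~\ref{lem:R_block}. I would show by induction that $\bsl_n$ has support in $\{0,\dots,n\}$ with a nonzero $n$th entry. The claim rests on a row-wise description of $\overline{\mathcal{R}}_\theta$. Take row $m$ of $\overline{\mathcal{R}}_\theta$ and let $m$ lie in block $k$.
\begin{itemize}
\item Its entry in column $m+1$ is a $\blacksquare$. For interior rows of the block this is the superdiagonal of $\overline{D}_{k,k}$; for the last row of the block it is the corner entry of $\overline{U}_{k,k+1}$.
\item All other nonzero entries sit in the first column of block $k$ (the $\star$'s of $\overline{D}_{k,k}$) or in the first column of block $k-1$ (the $\overline{L}_{k,k-1}$ entry). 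Both of these column indices are $\le m$.
\end{itemize}
Hence $\bsl_{n}=\bsl_{n-1}\overline{\mathcal{R}}_\theta$ picks up a nonzero entry in column $n$ from the $\blacksquare$ in row $n-1$, and no entries beyond column $n$. Since $\bsl_0=\bse_0^T$, the induction goes through and $\mathcal{L}_\theta$ is nonsingular lower triangular.

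\emph{Step 3: the Hankel property (item 3).} Using $\mathcal{I}_\theta^2=\mathrm{Id}$, the $(i,j)$ entry of $\mathcal{L}_\theta\mathcal{I}_\theta\mathcal{U}_\theta$ is
\[
\bsl_0\mathcal{I}_\theta\mathcal{R}_\theta^{i}\mathcal{I}_\theta\mathcal{I}_\theta\mathcal{R}_\theta^{j}\bsu_0=\bsl_0\mathcal{I}_\theta\mathcal{R}_\theta^{i+j}\bsu_0 .
\]
This depends only on $i+j$, so the product is Hankel.

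\emph{Step 4: identification with $\mathcal{M}_\theta$ (item 4).} Since the first block of $\mathcal{I}_\theta$ is $J_{e_1}$, we have $\bsl_0\mathcal{I}_\theta=\bse_{e_1-1}^T$. By Step 1 the entry from Step 3 therefore equals $z_{e_1-1}(X^{i+j})$, the $(e_1-1)$th Ostrowski digit of $X^{i+j}$. It remains to show that the linear functional $\phi(P):=$ coefficient of $X^{-1}$ in $P\theta$ satisfies $\phi(P)=\lc(A_1)^{-1}z_{e_1-1}(P)$. Applied to $P=X^{i+j}$, this gives $\phi(X^{i+j})=c_{i+j+1}=m_{i,j}$, which is item 4. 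By linearity it suffices to check the identity on the basis $B_n$.
\begin{itemize}
\item For $n<e_1$ we have $B_n=X^n$ (as $Q_0=1$), and $\theta=\lc(A_1)^{-1}X^{-e_1}+(\text{lower order terms})$. So the $X^{-1}$ coefficient of $X^n\theta$ vanishes for $n<e_1-1$ and equals $\lc(A_1)^{-1}$ for $n=e_1-1$.
\item For $n\ge e_1=d_1$, Lemma~\ref{lem:ValZec} (or directly $\nu(\langle B_n\theta\rangle)\le -d_1-1\le-2$) shows that the $X^{-1}$ coefficient of $B_n\theta$ is $0$. Here one also uses that $B_n\theta=[B_n\theta]+\langle B_n\theta\rangle$ with the polynomial part $[B_n\theta]$ contributing no $X^{-1}$ term.
\end{itemize}
This matches $\lc(A_1)^{-1}z_{e_1-1}(B_n)$ in both cases, completing the plan.
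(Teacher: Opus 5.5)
Your proposal is correct and follows essentially the same route as the paper. Item 2 comes from reading $\mathcal{R}_\theta$ as multiplication by $X$ in the basis $\mathcal{B}_\theta$, item 1 from the near-triangular block shape in Lemma~\ref{lem:R_block}, item 3 from $\mathcal{I}_\theta\mathcal{I}_\theta=\mathcal{I}$, and item 4 from identifying row $e_1-1$ of $\mathcal{U}_\theta$ with the coefficients $c_{m+1}$ via the valuation bounds of Lemma~\ref{lem:ValZec}. The differences are cosmetic: you derive triangularity of $\mathcal{U}_\theta$ from item 2, and you check the functional identity on the basis $B_n$ instead of applying Lemma~\ref{lem:ValZec} to $X^m-z^{(m)}_{e_1-1}X^{e_1-1}$.
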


\begin{proof}
	For the first item, note that
	Lemma \ref{lem:R_block} shows that the matrix $\mathcal{R}_{\theta}$ is an almost upper triangular matrix with nonzero entries in the lower diagonal part only in the first subdiagonal which contains only $\blacksquare$s and the matrix $\overline{\mathcal{R}}_{\theta}$ is an almost lower triangular matrix with nonzero entries in the upper diagonal part only in the first supdiagonal which contains only $\blacksquare$s. Hence the first item is an easy consequence using the definition of $\mathcal{U}_\phi$ and $\mathcal{L}_\phi$.\\
	
	We next prove the Hankel property in item 3. Write
	\[
	\mathcal{L}_{\theta}\mathcal{I}_{\theta}\mathcal{U}_{\theta}
	=:(k_{i,j})_{i,j\geq 0}.
	\]
	By the definitions of $\mathcal{L}_{\theta}$ and
	$\mathcal{U}_{\theta}$, and using the fact that
	$\mathcal{I}_{\theta}$ is self-inverse, i.e. $\mathcal I_\theta\mathcal I_\theta=\mathcal I$, we obtain
	\begin{align*}
		k_{i,j}
		&=\bsl_i\mathcal{I}_{\theta}\bsu_j\\
		&=\bsl_0\mathcal{I}_{\theta}
		\mathcal{R}_{\theta}^{\,i}
		\mathcal{I}_{\theta}
		\mathcal{I}_{\theta}
		\mathcal{R}_{\theta}^{\,j}\bsu_0\\
		&=\bsl_0\mathcal{I}_{\theta}
		\mathcal{R}_{\theta}^{\,i-s}
		\mathcal{I}_{\theta}
		\mathcal{I}_{\theta}
		\mathcal{R}_{\theta}^{\,j+s}\bsu_0\\
		&=\bsl_{i-s}\mathcal{I}_{\theta}\bsu_{j+s}\\
		&=k_{i-s,j+s},
	\end{align*}
	for all admissible $i,j$ and $s$. Thus, $k_{i,j}$ depends only on
	$i+j$, and consequently
	$\mathcal{L}_{\theta}\mathcal{I}_{\theta}\mathcal{U}_{\theta}$
	is a Hankel matrix.
	
	We now prove item 2 by induction on $j$. For $j=0$, the assertion
	is immediate. Suppose that, for some $j\in\NN_0$, the $j$th column
	$\bsu_j$ of $\mathcal{U}_{\theta}$ has the form
	\[
	\bsu_j
	=
	\bigl(
	z^{(j)}_0,z^{(j)}_1,z^{(j)}_2,\ldots,z^{(j)}_j,0,\ldots
	\bigr)^{\mathsf T},
	\]
	where
	\[
	X^j=\sum_{i=0}^{j}z^{(j)}_iB_i(X).
	\]
	It remains to show that the coefficients of the
	$\theta$-Ostrowski representation of $X^{j+1}$ are obtained from
	those of $X^j$ by multiplication with $\mathcal{R}_{\theta}$; that
	is,
	\begin{equation}\label{eq:toshow}
		\bigl(z^{(j+1)}_0,z^{(j+1)}_1,\ldots\bigr)^{\mathsf T}
		=
		\mathcal{R}_{\theta}
		\bigl(z^{(j)}_0,z^{(j)}_1,\ldots\bigr)^{\mathsf T}.
	\end{equation}
	
	Using the definition of the $\theta$-Ostrowski representation and
	the decomposition of the basis polynomials, we have
	\begin{align*}
		X^{j+1}
		&=XX^j\\
		&=X\sum_{i=0}^{\infty}z^{(j)}_iB_i(X)\\
		&=X\sum_{k=0}^{\infty}
		\sum_{v=0}^{e_{k+1}-1}
		z^{(j)}_{d_k+v}X^vQ_k(X)\\
		&=\sum_{k=0}^{\infty}
		\sum_{v=0}^{e_{k+1}-1}
		z^{(j)}_{d_k+v}X^{v+1}Q_k(X)\\
		&=\sum_{k=0}^{\infty}
		\sum_{v=0}^{e_{k+1}-2}
		z^{(j)}_{d_k+v}X^{v+1}Q_k(X)
		+
		\sum_{k=0}^{\infty}
		z^{(j)}_{d_k+e_{k+1}-1}X^{e_{k+1}}Q_k(X).
	\end{align*}
	The last expression is precisely the recursion defining
	$\mathcal{R}_{\theta}$. Hence \eqref{eq:toshow} follows, completing
	the induction. Therefore, the $j$th column of $\mathcal{U}_{\theta}$
	contains the coefficients of the $\theta$-Ostrowski representation
	of $X^j$. This proves the second.
	
	It remains to prove item 4. Since both
	$\mathcal{M}_{\theta}$ and
	$\mathcal{L}_{\theta}\mathcal{I}_{\theta}\mathcal{U}_{\theta}$
	are Hankel matrices, it suffices to compare one suitable row of each
	matrix. More precisely, it is enough to show that the zeroth row of
	$\mathcal{M}_{\theta}$ agrees with
	$\lc(A_1(X))^{-1}$ times the row of
	$\mathcal{U}_{\theta}$ which is indexed by $(e_1-1)$.
	
	Write
	\[
	\mathcal{U}_{\theta}=(u_{i,j})_{i,j\geq 0}
	\]
	and write the Laurent expansion of $\theta$ as
	\[
	\theta=\sum_{i=1}^{\infty}c_iX^{-i}.
	\]
	We first determine the first $e_1$ coefficients. The first convergent
	to $\theta$ is $1/A_1(X)$. Consequently,
	\begin{equation}\label{eq:item4_1}
		(c_1,c_2,\ldots,c_{e_1})
		=
		(0,0,\ldots,0,\lc(A_1(X))^{-1}).
	\end{equation}
	
	For every $m\in\NN_0$, we have
	\[
	\nu\bigl(\langle X^m\theta-c_{m+1}X^{-1}\rangle\bigr)<-1.
	\]
	Let
	\[
	P(X)=X^m-z^{(m)}_{e_1-1}X^{e_1-1},
	\]
	where
	\[
	X^m=\sum_{i=0}^{m}z^{(m)}_iB_i(X)
	\]
	is the $\theta$-Ostrowski representation of $X^m$. Applying
	Lemma~\ref{lem:ValZec} to $P(X)$ gives
	\[
	\nu\bigl(
	\langle (X^m
	-z^{(m)}_{e_1-1}X^{e_1-1})\theta\rangle
	\bigr)=\nu\bigl(
	\langle X^m\theta
	-z^{(m)}_{e_1-1}X^{e_1-1}\theta\rangle
	\bigr)<-1.
	\]
	Together with \eqref{eq:item4_1}, the inequalities
	\[
	\nu\bigl(\langle X^m\theta-c_{m+1}X^{-1}\rangle\bigr)<-1
	\]
	and
	\[
	\nu\bigl(\langle X^{e_1-1}\theta-c_{e_1}X^{-1}\rangle\bigr)<-1,
	\]
	as well as part~(2), this implies
	\[
	c_{m+1}
	=
	z^{(m)}_{e_1-1}c_{e_1}
	=
	u_{e_1-1,m}c_{e_1}
	=
	\lc(A_1(X))^{-1}u_{e_1-1,m}.
	\]
	Thus, the zeroth row of $\mathcal{M}_{\theta}$ coincides with
	$\lc(A_1(X))^{-1}$ times the row of
	$\mathcal{U}_{\theta}$ indexed by $(e_1-1)$. Since both matrices are Hankel, we conclude
	that
	\[
	\mathcal{M}_{\theta}
	=
	\lc(A_1(X))^{-1}
	\mathcal{L}_{\theta}\mathcal{I}_{\theta}\mathcal{U}_{\theta}.
	\]
\end{proof}

\section{The digital construction point of view} \label{sec:4}

The original motivation was to translate the search for counterexamples to variants of Littlewood's conjecture, namely \eqref{eq:Littlewood_series} and \eqref{eq:padicLittlewood_series}, into the search for specific Kronecker-type sequences. The hope was that viewing these sequences from the perspective of digital sequences would lead to new approaches to, or perspectives on, such counterexamples. Interestingly, this study of Kronecker-type sequences, motivated by Littlewood's conjecture, now also offers a new perspective on the famous Sobol' construction \cite{sobol}. Sobol' sequences constitute one of the most popular classes of so-called low-discrepancy sequences, which are used in quasi-Monte Carlo methods. 

Our new perspective on the Sobol' construction makes it particularly easy to generalize the original construction, which uses $s$ distinct primitive polynomials over the binary field $\FF_2$, to one based on any $s$ monic, pairwise coprime polynomials of degree at least $1$ over an arbitrary finite field $\FF_q$. This generalization was previously obtained by Faure and Lemieux \cite{FaureLemieux2016} for irreducible polynomials. Their proof relates the resulting so-called irreducible Sobol' sequences to generalized Niederreiter sequences \cite{nie1987}; see also \cite{Tez2}. Within our perspective on the Sobol' construction, the proof of the good distribution properties of the Sobol' sequences considered in this paper can be presented very briefly and elegantly using identities involving their generating matrices.

\subsection{A construction in the spirit of Sobol' and a new and easy proof of its the excellent distribution properties}\label{sec:41}

First, we introduce the concept of digital $(u,\bse,s)$-sequences in the sense of Tezuka \cite{Tez2}, which generalizes digital $(t,s)$-sequences in the sense of Niederreiter (see, e.g., \cite{niesiam}).

\begin{defi}\label{def:dig_ues_seq}
	Let $\mathcal{C}^{(1)},\ldots,\mathcal{C}^{(s)}\in\FF_q^{\NN_0\times \NN_0}$, and let $(\bsx_n)_{n\geq 0}$ be the sequence in $[0,1]^s$ which is constructed from $\mathcal{C}^{(1)},\ldots,\mathcal{C}^{(s)}$ as described in Remark \ref{rem:KTseq}. The sequence $(\bsx_n)_{n\geq 0}$ is called a digital $(u,\bse,s)$-sequence over $\FF_q$ with quality parameter $u\geq 0$ and $\bse=(e_1,\ldots,e_s)\in\NN^s$ if, for every choice of nonnegative integers $l_1,\ldots,l_s$, the $(m-u)\times m$ matrix over $\FF_q$ formed by the row vectors
	$$(c^{(i)}_{j,0},c^{(i)}_{j,1},\ldots,c^{(i)}_{j,m-1} )$$
	with $j\in\{0,\ldots,e_il_i-1\}$ and $i\in\{1,\ldots,s\}$ has full row rank $m-u$, where
	$$m:=l_1e_1+\cdots+l_se_s+u.$$
\end{defi}

\begin{remark}\label{rem:tez_u_t}
	Note that a $(u,(e_1,\ldots,e_s),s)$-sequence is also a $(t,(1,\ldots,1),s)$-sequence, where
	$$t=u+\sum_{i=1}^s e_i-1.$$
	The latter is commonly referred to as a $(t,s)$-sequence in the sense of Niederreiter. For further information on digital sequences, see, for example, \cite{Tez2} and \cite{niesiam}.
\end{remark}

The following lemma presents several possible modifications of the generating matrices that preserve the $(u,\bse,s)$ property of the resulting digital sequence. 
\begin{lemma}\label{lem:scrambling}
	Let $\mathcal{C}^{(1)},\ldots,\mathcal{C}^{(s)}\in\FF_q^{\NN_0\times \NN_0}$ be qualified to construct a digital $(u,\bse,s)$-sequence over $\FF_q$. Let $\mathcal{U}\in\FF_q^{\NN_0\times \NN_0}$ be a NUT matrix and let $\mathcal{L}^{(1)},\ldots,\mathcal{L}^{(s)}\in\FF_q^{\NN_0\times \NN_0}$ be NLT (non-singular lower triangular) matrices. 
	\begin{itemize}
		\item Then  $\mathcal{L}^{(1)}\mathcal{C}^{(1)},\ldots,\mathcal{L}^{(s)}\mathcal{C}^{(s)}\in\FF_q^{\NN_0\times \NN_0}$ are qualified to construct a digital $(u,\bse,s)$-sequence over $\FF_q$. (Owen scrambling)
		\item Then  $\mathcal{C}^{(1)}\mathcal{U},\ldots,\mathcal{C}^{(s)}\mathcal{U}\in\FF_q^{\NN_0\times \NN_0}$ are qualified to construct a digital $(u,\bse,s)$-sequence over $\FF_q$. (Faure-Tezuka scrambling)
	\end{itemize}
	\end{lemma}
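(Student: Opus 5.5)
Both items reduce to a statement about the rank of a finite matrix: the property in Definition~\ref{def:dig_ues_seq} only involves the upper-left $m$ columns and the first $e_il_i$ rows of each $\mathcal{C}^{(i)}$. So I will fix $l_1,\ldots,l_s$, set $m=l_1e_1+\cdots+l_se_s+u$, and for each $i$ write $C^{(i)}_{l_i,m}$ for the $(e_il_i)\times m$ upper-left block of $\mathcal{C}^{(i)}$. Stacking these blocks gives the matrix $G\in\FF_q^{(m-u)\times m}$, which by hypothesis has rank $m-u$. For each modification I will show that the new stacked matrix $G'$ has the same row space as $G$, or is obtained from $G$ by right multiplication with an invertible matrix. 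Either way its rank is still $m-u$.

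For Owen scrambling, the key observation is that the first $e_il_i$ rows of $\mathcal{L}^{(i)}\mathcal{C}^{(i)}$ depend only on the first $e_il_i$ rows of $\mathcal{C}^{(i)}$, since $\mathcal{L}^{(i)}$ is lower triangular. Truncating the columns to the first $m$ then gives
$$(\mathcal{L}^{(i)}\mathcal{C}^{(i)})_{l_i,m}=L^{(i)}_{l_i}C^{(i)}_{l_i,m},$$
where $L^{(i)}_{l_i}$ is the $(e_il_i)\times(e_il_i)$ upper-left block of $\mathcal{L}^{(i)}$. This block is lower triangular with nonzero diagonal, hence invertible. So $G'=\operatorname{diag}(L^{(1)}_{l_1},\ldots,L^{(s)}_{l_s})\,G$, where the block-diagonal factor is invertible, and therefore $\operatorname{rank}G'=\operatorname{rank}G$.

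For Faure--Tezuka scrambling, the dual observation applies: since $\mathcal{U}$ is upper triangular, the first $m$ columns of $\mathcal{C}^{(i)}\mathcal{U}$ depend only on the first $m$ columns of $\mathcal{C}^{(i)}$. This holds for every row, since all row entries beyond column $m-1$ are multiplied by zero entries of $\mathcal{U}$. Hence
$$(\mathcal{C}^{(i)}\mathcal{U})_{l_i,m}=C^{(i)}_{l_i,m}U_m,$$
with $U_m$ the invertible $m\times m$ upper-left block of $\mathcal{U}$. Because the same $\mathcal{U}$ is used for all $i$, the stacked matrix is $G'=GU_m$, and right multiplication by an invertible matrix preserves rank.

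The only point needing care is the truncation identity in each case. One must check that the infinite products are well defined. For $\mathcal{L}^{(i)}\mathcal{C}^{(i)}$ each entry is a finite sum, because rows of $\mathcal{L}^{(i)}$ are finitely supported. For $\mathcal{C}^{(i)}\mathcal{U}$ each entry is again a finite sum, because columns of $\mathcal{U}$ are finitely supported. I expect this bookkeeping to be the main obstacle. It is also why the same NUT matrix $\mathcal{U}$ must be used for all coordinates: different upper triangular matrices in different coordinates would not factor out of the stacked matrix $G$.
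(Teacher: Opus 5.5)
Your proof is correct and is exactly the ``basic linear algebra'' that the paper's one-line proof (an appeal to Definition~\ref{def:dig_ues_seq} plus citations to Owen and Faure--Tezuka) leaves implicit. The two mechanisms you identify are the right ones: left multiplication of the stacked $(m-u)\times m$ matrix by the invertible block-diagonal matrix $\operatorname{diag}(L^{(1)}_{l_1},\ldots,L^{(s)}_{l_s})$, and right multiplication by the invertible truncation $U_m$ of the common NUT matrix, both preserve the rank $m-u$. Your remarks on the finiteness of the sums and on why one common $\mathcal{U}$ is needed in all coordinates are also accurate.
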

	\begin{proof}
		Both follow from Definition~\ref{def:dig_ues_seq} together with basic linear algebra. The first item was introduced already in \cite{owen1997scrambled}. The second in \cite{faure2002another}. 
	\end{proof}

Definition~\ref{def:sobol_construction} provides a columnwise method for constructing the generating matrices $\mathcal{C}^{(1)},\ldots,\mathcal{C}^{(s)}\in\FF_q^{\NN_0\times \NN_0}$. This construction is analogous to the columnwise construction of $\mathcal{U}_{\theta}$ in Definition~\ref{def:mat_UandL} and can be viewed as a generalization of both the Sobol' construction \cite{sobol} and the subsequently introduced irreducible Sobol' sequences of Faure and Lemieux \cite{FaureLemieux2016}.

\begin{defi}\label{def:sobol_construction}
	Let $q=p^r$, where $p\in\PP$, $r\in\NN$, and $s\in\NN$. Choose $s$ pairwise coprime monic polynomials
	\[
	b_1(X),\ldots,b_s(X)\in\FF_q[X]
	\]
	of degrees $e_1,\ldots,e_s\geq 1$.
	
	For each $i\in\{1,\ldots,s\}$, construct a matrix
	$\mathcal{C}^{(i)}\in\FF_q^{\NN_0\times\NN_0}$ using the base polynomial
	\[
	b_i(X)=X^{e_i}+a^{(i)}_{e_i-1}X^{e_i-1}+\cdots+a^{(i)}_1X+a^{(i)}_0,
	\]
	where $a^{(i)}_{e_i-1},\ldots,a^{(i)}_1,a^{(i)}_0\in\FF_q$, by recursively defining its columns. We begin with the $e_i$ initial columns
	\[
	\begin{pmatrix}|&&|\\
		c^{(i)}_0&\cdots&c^{(i)}_{e_i-1}\\
		|&&|
	\end{pmatrix}
	=
	\begin{pmatrix}
		B_i\\
		0\\
		0\\
		\vdots
	\end{pmatrix}
	\in\FF_q^{\NN_0\times e_i},
	\]
	where $B_i$ is an $e_i\times e_i$ NUT matrix over $\FF_q$. All subsequent columns are defined recursively by
	\begin{equation}\label{equ:sobol_construction}
		c^{(i)}_{e_i+r}
		+a^{(i)}_{e_i-1}c^{(i)}_{e_i-1+r}
		+\cdots
		+a^{(i)}_1c^{(i)}_{1+r}
		+a^{(i)}_0c^{(i)}_{r}
		=
		\begin{pmatrix}
			0\\
			\vdots\\
			0\\
			c^{(i)}_{r}
		\end{pmatrix},
	\end{equation}
	where the column on the right-hand side begins with $e_i$ zero entries.
	
	We call the sequence $(\bsx_n)_{n\geq 0}$ constructed from the matrices
	$\mathcal{C}^{(1)},\ldots,\mathcal{C}^{(s)}$ via the algorithm described in Remark~\ref{rem:KTseq} the \emph{Sobol'-type sequence} associated with the base polynomials
	$b_1(X),\ldots,b_s(X)\in\FF_q[X]$.
\end{defi}

First, we observe that the matrices $\mathcal{C}^{(1)},\ldots,\mathcal{C}^{(s)}$ are NUT matrices; this follows immediately from Definition~\ref{def:sobol_construction}.

\begin{lemma}\label{lem:inversematrices}
	The matrices $\mathcal{C}^{(1)},\ldots,\mathcal{C}^{(s)}$ constructed in Definition~\ref{def:sobol_construction} are NUT matrices in $\FF_q^{\NN_0\times\NN_0}$. Consequently, each matrix has a unique NUT inverse, which we denote by
	\[
	(\mathcal{C}^{(1)})^{-1}=:\mathcal{C}^{(-1)},\ldots,
	(\mathcal{C}^{(s)})^{-1}=:\mathcal{C}^{(-s)}.
	\]
\end{lemma}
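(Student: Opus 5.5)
We need to show that each $\mathcal{C}^{(i)}$ is upper triangular with nonzero diagonal. The plan is to prove, by induction on the column index $n$, that column $c^{(i)}_n$ has its last nonzero entry in row $n$. In other words, the entries in rows $n+1,n+2,\ldots$ vanish and the entry in row $n$ is nonzero.

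For $n<e_i$ this is exactly the hypothesis that $B_i$ is an $e_i\times e_i$ NUT matrix, padded below with zeros.

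For $n=e_i+r$ with $r\ge 0$, solve the recursion \eqref{equ:sobol_construction} for the new column:
\[
c^{(i)}_{e_i+r}=S^{e_i}c^{(i)}_r-\sum_{j=0}^{e_i-1}a^{(i)}_j c^{(i)}_{j+r}.
\]
Here $S$ denotes the shift operator that prepends $e_i$ zeros, i.e.\ it moves row $t$ to row $t+1$, applied $e_i$ times. By the induction hypothesis, each $c^{(i)}_{j+r}$ with $j\le e_i-1$ is supported in rows $\le j+r\le e_i+r-1$. The shifted column $S^{e_i}c^{(i)}_r$ is supported in rows $\le r+e_i$, and its entry in row $r+e_i$ equals the row-$r$ entry of $c^{(i)}_r$, which is nonzero by induction. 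Hence $c^{(i)}_{e_i+r}$ is supported in rows $\le e_i+r$ and has a nonzero diagonal entry, equal to the diagonal entry of $c^{(i)}_r$. This completes the induction.

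It follows that $\mathcal{C}^{(i)}$ is upper triangular and every diagonal entry is nonzero. Indeed, the diagonal is periodic with period $e_i$ and repeats the diagonal of $B_i$.

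For the inverse, every NUT matrix in $\FF_q^{\NN_0\times\NN_0}$ has a unique two-sided NUT inverse. This follows because each finite leading $m\times m$ principal block is a nonsingular upper triangular matrix, and the inverses of these blocks are compatible: the leading $m\times m$ block of the inverse of the $(m+1)\times(m+1)$ block is the inverse of the $m\times m$ block. Alternatively, one can solve $\mathcal{C}\mathcal{V}=\mathcal{I}$ column by column via back substitution. Each column involves only finitely many entries, so products are well defined. Uniqueness is standard: a left inverse and a right inverse of the same matrix must coincide.

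There is no real obstacle here. The only point requiring care is reading off the sign and shift conventions in \eqref{equ:sobol_construction} correctly, so that the top entry of the shifted column lands exactly on the diagonal.
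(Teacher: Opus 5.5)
Your proof is correct and takes the same route as the paper, which simply asserts that the NUT property ``follows immediately from Definition~\ref{def:sobol_construction}''. Your induction supplies the omitted details: the new column $c^{(i)}_{e_i+r}=\mathcal{X}^{e_i}c^{(i)}_r-\sum_{j=0}^{e_i-1}a^{(i)}_jc^{(i)}_{j+r}$ takes its diagonal entry from $c^{(i)}_r$, since all other terms vanish from row $e_i+r$ on, and the compatible-block-inverse argument handles the inverse (one cosmetic point: your $S$ is the one-step shift, i.e.\ the paper's $\mathcal{X}$, so it is $S^{e_i}$ rather than $S$ that prepends $e_i$ zeros).
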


Theorem~\ref{thm:sobol} can be proved elegantly by exploiting identities involving the generating matrices and their inverses.

\begin{theorem}\label{thm:sobol}
	The matrices $\mathcal{C}^{(1)},\ldots,\mathcal{C}^{(s)}$ constructed in Definition~\ref{def:sobol_construction} generate a digital $(0,\bse,s)$-sequence over $\FF_q$, where
	\[
	\bse=(e_1,\ldots,e_s).
	\]
\end{theorem}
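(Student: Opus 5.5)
The plan is to translate the rank condition of Definition~\ref{def:dig_ues_seq} (with $u=0$) into a divisibility statement in $\FF_q[X]$. We identify a finitely supported vector $\vec{n}=(n_0,n_1,\ldots)^T$ with the polynomial $N(X)=\sum_j n_jX^j$. The key tool is the inverse matrices $\mathcal{C}^{(-i)}$ from Lemma~\ref{lem:inversematrices}, in analogy with Lemma~\ref{lem:inver_U_theta}, where the inverse of $\mathcal{U}_\theta$ is built columnwise from base polynomials.

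\emph{Step 1: an intertwining identity.} Let $\mathcal{M}_{b_i}$ be the matrix of multiplication by $b_i(X)$ on coefficient vectors, and let $\mathcal{S}$ be the down-shift by one position. The recursion \eqref{equ:sobol_construction} says exactly that $\mathcal{C}^{(i)}$ applied to the coefficient vector of $X^rb_i(X)$ equals $\mathcal{S}^{e_i}c^{(i)}_r$. Hence
\[
\mathcal{C}^{(i)}\mathcal{M}_{b_i}=\mathcal{S}^{e_i}\mathcal{C}^{(i)},
\qquad\text{equivalently}\qquad
\mathcal{M}_{b_i}\mathcal{C}^{(-i)}=\mathcal{C}^{(-i)}\mathcal{S}^{e_i}.
\]
This has two consequences. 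First, column $e_il+v$ (with $0\le v<e_i$) of $\mathcal{C}^{(-i)}$ is the coefficient vector of $g_v(X)b_i(X)^l$, where $g_v$ corresponds to column $v$ of $\mathcal{C}^{(-i)}$. Second, the first $e_i$ columns of $\mathcal{C}^{(-i)}$ are $B_i^{-1}$ padded by zeros, so $\deg g_v=v$ exactly. Thus the columns of $\mathcal{C}^{(-i)}$ form a ``$b_i$-adic'' basis $\{g_v b_i^l\}$ of $\FF_q[X]$, with one basis element of each degree.

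\emph{Step 2: a kernel description.} Fix $l_1,\ldots,l_s$ and set $m=\sum_i l_ie_i$. Because $\mathcal{C}^{(i)}$ is upper triangular, for $\deg N<m$ the product $\mathcal{C}^{(i)}\vec n$ involves only the first $m$ columns. So the $m\times m$ matrix of Definition~\ref{def:dig_ues_seq} annihilates $\vec n$ if and only if, for every $i$, the first $e_il_i$ entries of $\mathcal{C}^{(i)}\vec n$ vanish. This is equivalent to $\vec n$ lying in the span of the columns of $\mathcal{C}^{(-i)}$ with index at least $e_il_i$.

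I will show that this span is exactly the set of multiples of $b_i^{l_i}$.
\begin{itemize}
\item[$\subseteq$] Every column $g_vb_i^l$ with $l\ge l_i$ is divisible by $b_i^{l_i}$.
\item[$\supseteq$] Given $N=Hb_i^{l_i}$, expand $H$ in the basis of Step~1: $H=\sum z_{l,v}g_vb_i^l$. Multiplying by $b_i^{l_i}$ shifts every index $l$ by $l_i$, so $N$ is a combination of columns with index at least $e_il_i$.
\end{itemize}
The second inclusion is the only point needing care, and it is where the identity of Step~1 is really used; I expect it to be the main (mild) obstacle.

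\emph{Step 3: conclusion.} Suppose $\vec n$ is in the kernel. Then $N$ is divisible by every $b_i^{l_i}$. Since the $b_i$ are pairwise coprime, $N$ is divisible by $\prod_i b_i^{l_i}$, which has degree $m>\deg N$. Hence $N=0$. The $m\times m$ matrix therefore has trivial kernel, i.e.\ full rank $m$. This is the $(0,\bse,s)$ property, which proves Theorem~\ref{thm:sobol}.
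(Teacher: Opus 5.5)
Your proposal is correct and follows essentially the same route as the paper: the intertwining identity $\mathcal{C}^{(i)}b_i(\mathcal{X})=\mathcal{X}^{e_i}\mathcal{C}^{(i)}$ and its inverted form (your $\mathcal{M}_{b_i}$ and $\mathcal{S}$ are the paper's $b_i(\mathcal{X})$ and $\mathcal{X}$), the deduction $b_i(X)^{l_i}\mid n(X)$ for kernel vectors, and the coprimality and degree count. Only your inclusion ``$\subseteq$'' is used in Step~3, which the paper obtains directly as $\vec n=\mathcal{C}^{(-i)}\mathcal{X}^{e_il_i}\vec v^{(i)}=b_i(\mathcal{X})^{l_i}\mathcal{C}^{(-i)}\vec v^{(i)}$, so the ``$\supseteq$'' direction you flagged as the main obstacle is not needed at all.
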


\begin{proof}
	Let
	\[
	\mathcal{X}:=
	\begin{pmatrix}
		\bszero\\
		\mathcal{I}
	\end{pmatrix},
	\]
	where $\mathcal{I}$ denotes the infinite identity matrix over $\FF_q$. By the recursion in \eqref{equ:sobol_construction}, the matrix $\mathcal{C}^{(i)}$ satisfies
	\[
	\mathcal{C}^{(i)}b_i(\mathcal{X})
	=
	\mathcal{X}^{e_i}\mathcal{C}^{(i)}.
	\]
	Multiplying this identity from the left by $(\mathcal{C}^{(i)})^{-1}=\mathcal{C}^{(-i)}$ and from the right by $\mathcal{C}^{(-i)}$ gives
	\begin{equation}\label{eq:sobol_construction}
		b_i(\mathcal{X})\mathcal{C}^{(-i)}
		=
		\mathcal{C}^{(-i)}\mathcal{X}^{e_i}.
	\end{equation}
	In particular, if
	\[
	\mathcal{C}^{(-i)}
	=
	\begin{pmatrix}
		|&|&&\\
		c^{(-i)}_0&c^{(-i)}_1&\cdots&\\
		|&|&&
	\end{pmatrix},
	\]
	then \eqref{eq:sobol_construction} implies
	\[
	b_i(\mathcal{X})c^{(-i)}_j=c^{(-i)}_{j+e_i},
	\qquad j\in\NN_0.
	\]
	
	For a polynomial
	\[
	p(X)=\sum_{j=0}^r a_jX^j\in\FF_q[X],
	\]
	we write
	\[
	\vec{p}:=(a_0,a_1,\ldots,a_r,0,0,\ldots)^{\mathsf T}\in\FF_q^{\NN_0}.
	\]
	Then
	\begin{equation}\label{eq:identity_vec_Xi}
		\mathcal{I}\vec{p}
		=
		p(\mathcal{X})
		\begin{pmatrix}
			1\\0\\0\\\vdots
		\end{pmatrix}.
	\end{equation}
	
	We now verify the linear independence criterion from Definition~\ref{def:dig_ues_seq}. Let $l_1,\ldots,l_s\in\NN_0$ and put
	\[
	m:=l_1e_1+\cdots+l_se_s.
	\]
	We have to show that the $m$ row vectors
	\[
	\bigl(c^{(i)}_{j,0},c^{(i)}_{j,1},\ldots,c^{(i)}_{j,m-1}\bigr),
	\qquad
	i\in\{1,\ldots,s\},\quad
	j\in\{0,\ldots,e_il_i-1\},
	\]
	are linearly independent over $\FF_q$.
	
	By Definition~\ref{def:dig_ues_seq}, it suffices to show that the
common kernel of the corresponding $m\times m$ matrix is trivial.
Let $\vec n$ belong to this kernel. Thus, $\vec n$ is the coefficient
vector of a polynomial $n(X)\in\FF_q[X]$ with $\deg(n)<m$, and the
first $e_il_i$ entries of $\mathcal{C}^{(i)}\vec n$ vanish for every
$i\in\{1,\ldots,s\}$. Therefore, there exists
$v^{(i)}(X)\in\FF_q[X]$ such that
\[
\mathcal{C}^{(i)}\vec n=\mathcal{X}^{e_il_i}\vec v^{(i)}.
\]
	Since $\mathcal{C}^{(-i)}\vec{v}^{(i)}$ is again the coefficient vector of a polynomial, say $w^{(i)}(X)$, it follows from \eqref{eq:sobol_construction} and \eqref{eq:identity_vec_Xi} that
	\[
	n(\mathcal{X})
	=
	b_i(\mathcal{X})^{l_i}w^{(i)}(\mathcal{X}).
	\]
	Consequently,
	\[
	b_i(X)^{l_i}\mid n(X)
	\qquad\text{for every }i\in\{1,\ldots,s\}.
	\]
	Because the polynomials $b_1(X),\ldots,b_s(X)$ are pairwise coprime, their powers are pairwise coprime, and hence
	\[
	\prod_{i=1}^s b_i(X)^{l_i}\mid n(X).
	\]
	The degree of the polynomial on the left-hand side is
	\[
	\sum_{i=1}^s l_i\deg(b_i(X))
	=
	\sum_{i=1}^s l_ie_i
	=
	m.
	\]
	Since $\deg(n(X))<m$, this is possible only if $n(X)=0$. Therefore, the stated row vectors are linearly independent.
	
	By Definition~\ref{def:dig_ues_seq}, the matrices
	$\mathcal{C}^{(1)},\ldots,\mathcal{C}^{(s)}$ generate a digital
	$(0,\bse,s)$-sequence over $\FF_q$, where
	\[
	\bse=(e_1,\ldots,e_s).
	\]
\end{proof}

\begin{example}\label{ex:vdc}
	\begin{enumerate}
		\item If $s=1$, $b_1(X)=X$, and $B_1=(1)$, then Definition~\ref{def:sobol_construction} yields
		\[
		\mathcal{C}^{(1)}=\mathcal{I}
		\]
		in $\FF_q^{\NN_0\times\NN_0}$. Consequently, the algorithm from Remark~\ref{rem:KTseq} generates the van der Corput sequence in base $q$,
		\[
		\bigl(\varphi_q(n)\bigr)_{n\geq 0},
		\]
		where $\varphi_q:\NN_0\to[0,1)$ denotes the $q$-adic radical inverse function. More precisely, if
		\[
		n=n_0+n_1q+n_2q^2+\cdots
		\]
		is the unique base-$q$ representation of $n$, then
		\[
		\varphi_q(n)
		=
		\frac{n_0}{q}
		+\frac{n_1}{q^2}
		+\frac{n_2}{q^3}
		+\cdots.
		\]
		
		\item If $s=1$, $b_1(X)=X+a$ with $a\in\FF_q^*$, and $B_1=(1)$, then Definition~\ref{def:sobol_construction} yields
		\[
		\mathcal{C}^{(1)}
		=
		\left(
		\binom{j}{i}(-a)^{j-i}
		\right)_{i,j\geq 0}.
		\]
		Moreover, the $j$th column of $\mathcal{C}^{(-1)}$ is given by the coefficient vector of
		\[
		b_1(X)^j=(X+a)^j
		=
		\sum_{i=0}^j \binom{j}{i}a^{j-i}X^i,
		\]
		that is,
		\[
		\mathcal{C}^{(-1)}
		=
		\left(
		\binom{j}{i}a^{j-i}
		\right)_{i,j\geq 0}.
		\]
		Furthermore, the proof of Theorem~\ref{thm:sobol} shows that, for every $l\in\NN_0$ and every polynomial $n(X)\in\FF_q[X]$,
		\[
		\mathcal{C}^{(1)}\vec n
		=
		\mathcal{X}^{l}\vec v
		\quad\text{for some }\vec v
		\]
		if and only if
		\[
		(X+a)^l\mid n(X).
		\]
	\end{enumerate}
\end{example}

\subsection{Kronecker-type and related sequences and (possible) counterexamples of variants of the Littlewood conjecture}\label{sec:42}
The different variants of Littlewood's conjecture over $\FF_q((X^{-1}))$ can be linked to Kronecker-type sequences and related sequences (see, e.g., \cite{Hofer2026}). These connections motivate the study of Kronecker-type sequences in this paper, with particular emphasis on their generating Hankel matrices. The remainder of the paper presents several observations, discussions, and open problems.

\begin{example}\label{ex:1D_Kron_type} Let $\theta \in\overline{\FF_q((X^{-1}))}$ be badly approximable. We have the following three possibilies to construct a digital sequence. 
	\begin{enumerate}
		\item We use $\mathcal{C}=\mathcal{M}_\theta$ due to Definition~\ref{def:Hankelmatrix}. Then from 
		the theory of continued fractions we derive 
		\begin{equation}\label{eq:badlyapprox_series}
			\inf_{n(X)\in\FF_q[X]\setminus\{0\}}|n(X)|\cdot \|n(X)\theta\|= e^{-\sup_{k\geq 1}(\deg(A_{k+1}))}.
		\end{equation} We set $$t=\sup_{k\geq 1}(\deg(A_{k+1}))-1.$$ Lemma~\ref{lem:HM} shows then that the digital sequences described in Remark \ref{rem:KTseq} is a $(t,1)$-sequence over $\FF_q$. 
		\item We use $\mathcal{C}=\mathcal{I}_\theta\mathcal{U}_\theta$ due to Definitions~\ref{def:mat_RandI} and \ref{def:mat_UandL}. We set $$t=\sup_{k\geq 1}(\deg(A_{k+1}))-1.$$ Then from item~1, Lemma~\ref{lem:scrambling}, and Theorem~\ref{prop:1} we know that the digital sequences described in Remark \ref{rem:KTseq} is a $(t,1)$-sequence. 
			\item We use $\mathcal{C}=\mathcal{U}_\theta$. Then immediately from the non-singular upper triangular property we know that the  digital sequences described in Remark \ref{rem:KTseq} is a $(0,1)$-sequence. 
	\end{enumerate}

\end{example}

	All three types of sequences in Example~\ref{ex:1D_Kron_type} yield one-dimensional low-discrepancy sequences. The analogue of equation \eqref{eq:badlyapprox_series} over $\RR$ is a badly approximable $\alpha\in\RR$, that is,
$$\inf_{h\in\ZZ\setminus\{0\}}|h|\cdot \|h\alpha\|= c_{\alpha}>0.$$
The analogue of Item~1 in Example \ref{ex:1D_Kron_type} is the $(n\alpha)$-sequence, which has been studied extensively (see \cite{DT} and the references therein). An analogue in $\RR$ of Items~2 and~3 in Example \ref{ex:1D_Kron_type} is known only in the case where $\alpha=\frac{\sqrt{5}-1}{2}=:\varphi$, the golden ratio. This analogue (see \cite{Drmota} and the references therein) is based on the so-called Zeckendorf expansions of $n$, together with a suitable radical inverse function comparable to the $q$-adic radical inverse function (see Example~\ref{ex:vdc}). In \cite{Drmota}, the distribution of this \emph{Zeckendorf--Van der Corput sequence} is related to that of the $(n\varphi)$-sequence. It remains an open problem to define appropriate Ostrowski--van der Corput sequences in $\RR$ and to investigate whether analogous relations hold between the distributions of these sequences and the corresponding $(n\alpha)$-sequences.

\begin{remark}\label{rem:CE_tvalue} Using $\theta_1,\ldots,\theta_s \in\overline{\FF_q((X^{-1}))}$ all badly approximable. We have again three possibilies to construct a digital sequence. 
	\begin{enumerate}
		\item We use $\mathcal{C}^{(1)}=\mathcal{M}_{\theta_1},\ldots,\mathcal{C}^{(s)}=\mathcal{M}_{\theta_s}$. Then from \cite[Theorem~2]{larnie1993} we derive that the Kronecker-type sequence, forms a $(t,s)$-sequence with $t\geq 0$
		 if and only if 
		\begin{equation}\label{eq:Littlewood_series_2}\inf_{(Q_1,\ldots,Q_s)\in\FF[X]^s\setminus\{\boldsymbol{0}\}} \max\{\frac{1}{e},|Q_1|\}\cdots \max\{\frac{1}{e},|Q_s|\} \cdot\|Q_1\theta_1+\cdots+Q_s\theta_s\|\geq e^{-t-s},
		\end{equation}
		with $t\geq 0$. 
		
		The latter holds only for a $t\in\NN_0$ if and only if $(\theta_1,\ldots,\theta_s)\in\overline{\FF_q((X^{-1}))}^s$ is a counterexample to \eqref{eq:Littlewood_series}. 
	
	\item We use $\mathcal{C}^{(1)}=\mathcal{I}_{\theta_1}\mathcal{U}_{\theta_1},\ldots,\mathcal{C}^{(s)}=\mathcal{I}_{\theta_s}\mathcal{U}_{\theta_s}$, where $(\theta_1,\ldots,\theta_s)\in\overline{\FF_q((X^{-1}))}^s$ is a counterexample to \eqref{eq:Littlewood_series}. 
The $t$-value is the same as the one above, since multiplying $\mathcal{C}^{(i)}$ on the left by $\lc(A_1^{(i)}(X))^{-1}\mathcal{L}_{\theta_i}$, for $i=1,\ldots,s$, can be viewed as applying the Owen scrambling described in Lemma~\ref{lem:scrambling} to the sequence. Note that Theorem~\ref{prop:1} implies that the scrambled sequence coincides with the sequence in the preceding item.

		\item  We use $\mathcal{C}^{(1)}=\mathcal{U}_{\theta_1},\ldots,\mathcal{C}^{(s)}=\mathcal{U}_{\theta_s}$, where $(\theta_1,\ldots,\theta_s)\in\overline{\FF_q((X^{-1}))}^s$ is a counterexample to \eqref{eq:Littlewood_series}.  
		It is an interesting problem to determine the $t$-value of the digital sequence.
	
	\end{enumerate}

\end{remark}

As mentioned at the beginning of Section~\ref{sec:4} Sobol' sequences are one of the most popular low-discrepancy sequences used in Quasi-Monte Carlo (QMC) methods. In the theory of uniform distribution one open problem is: what is the best possible order of discrepancy one can obtain for a sequence in $[0,1)^s$. Therefore, it is of interest to get sharp estimates for the discrepancy. Levin established sharp estimates for many known low-discrepancy sequences \cite{LevinH1}, \cite{LevinH2}, \cite{Levints1}, \cite{Levints2} (see also \cite{faureLB} and \cite{HoferPAMS}). So far, sharp estimates for the discrepancy of Kronecker-type sequences for $s>1$ are missing (see \cite{levinLDhybrid} for a related problem). 
\begin{RQ}
	Is it possible to prove sharp discrepancy estimates for a low-discrepancy sequence of the type described in Items~1, 2, or~3 of Remark~\ref{rem:CE_tvalue}? Note that the existence of $\theta_1,\ldots,\theta_s \in\overline{\FF_q((X^{-1}))}$ satisfying \eqref{eq:Littlewood_series_2} remains an open problem.
\end{RQ}
For more details on the definition of the discrepancy and a discussion for sequences connected to several variants of the Littlewood conjecture we refer to \cite{Hofer2026}.

\subsection{Relating counterexample of different variants of the Littlewood conjecture via the generating matrices point of view?}\label{sec:43}
The starting point of the discussion in this section is the following example. 
\begin{example}\label{examp:levin}
	Let $\theta\in\overline{\FF_q((X^{-1}))}$. The matrices $\mathcal{M}_\theta$ and the identity matrix $\mathcal{I}$ in $\FF_q^{\NN_0\times\NN_0}$ generate a digital $(t,2)$-sequence over $\FF_q$ for some $t\in\NN_0$ if and only if $\theta$ is a counterexample to \eqref{eq:padicLittlewood_series} with $p(X)=X$; more precisely, this holds if and only if
	\begin{equation}\label{eq:Littlewood_series_2_CE}
		\inf_{Q\in\FF[X]\setminus\{0\},\,k\geq 0} |Q|\,\cdot\,\|QX^k \theta\|\geq e^{-(t+1)} \mbox{ with }t\in\NN_0.
	\end{equation}
	See \cite[Corollary~2.5, Lemma~2.3, and Lemma~2.4]{Hofer2026}.
\end{example}

Robertson \cite[Lemma~2.0.1]{Robertson2} provided a construction principle that associates, for any irreducible polynomial $P(X)$, a counterexample to \eqref{eq:Littlewood_series_2_CE} with a counterexample to \eqref{eq:padicLittlewood_series}.

\begin{lemma}[\cite{Robertson2}]\label{lem:robertson}
	Let $\theta=\sum_{i=1}^\infty c_iX^{-i}\in\FF_q((X^{-1}))$ and let $t\in\NN_0$ satisfy
	\begin{equation*}
		\inf_{Q\in\FF[X]\setminus\{0\},\,k\geq 0} |Q|\,\cdot\,\|QX^k \theta\|\geq e^{-(t+1)}.
	\end{equation*}
	Let $P(X)\in\FF_q[X]$ be irreducible. Then
	$$\overline{\theta}:=\sum_{i=1}^\infty c_i(P(X))^{-i}$$
	satisfies
	\begin{equation*}
		\inf_{Q\in\FF[X]\setminus\{0\},\,k\geq 0} |Q|\,\cdot\,\|QP(X)^k \overline{\theta}\|\geq e^{-(t+1)\deg(P(X))}.
	\end{equation*}
\end{lemma}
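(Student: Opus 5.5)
A natural route goes through the substitution $\phi:\FF_q((X^{-1}))\to\FF_q((P(X)^{-1}))\subset\FF_q((X^{-1}))$ induced by $X\mapsto P(X)$, with $d:=\deg P$. The map $\phi$ is a continuous ring homomorphism, and it is isometric after rescaling: $\nu(\phi(f))=d\,\nu(f)$. This holds because $\phi$ sends the leading term $X^{u}$ to $P^{u}$, which has degree $du$, while lower terms have strictly smaller degree. In particular $\phi$ maps $\FF_q[X]$ into $\FF_q[X]$ and fractional parts to fractional parts, so $\langle\phi(f)\rangle=\phi(\langle f\rangle)$, and $\overline{\theta}=\phi(\theta)$.

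Fix $Q\in\FF_q[X]\setminus\{0\}$ and $k\ge0$. I will use the $P$-adic expansion $Q=\sum_{j=0}^{m}g_j(X)P(X)^j$ with $\deg g_j<d$ and $g_m\neq0$, so that $\deg Q=dm+\deg g_m$. Then
\[
QP^k\overline\theta=\sum_{j=0}^{m} g_j\,\phi\bigl(X^{j+k}\theta\bigr)=\sum_{r=0}^{d-1}X^r\,\phi\bigl(H_r(X)X^k\theta\bigr),
\]
where $H_r(Y)=\sum_j h_{r,j}Y^j$ collects the coefficient of $X^r$ in $g_j$. This relies on every polynomial of degree $<d$ being an $\FF_q$-combination of $1,X,\ldots,X^{d-1}$ with constant coefficients, since constants commute with $\phi$. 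Taking fractional parts, $\langle QP^k\overline\theta\rangle=\sum_r X^r\phi(\langle H_rX^k\theta\rangle)$, up to the polynomial parts $X^r\phi([\cdot])$, which are polynomials and may be discarded.

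The key step is a valuation bound on this sum. Each summand $X^r\phi(\langle H_rX^k\theta\rangle)$ has valuation $r+d\,\nu(\langle H_rX^k\theta\rangle)$, and the hypothesis gives $\nu(\langle H_rX^k\theta\rangle)\ge -(t+1)-\deg H_r\ge-(t+1)-m$ for $H_r\ne0$. The main obstacle is cancellation between different $r$, so I need to show that the valuations of distinct nonzero summands are distinct. They are: mod $d$ they equal $r$, because $d\nu(\cdot)\equiv0$ and $0\le r<d$. Hence the valuation of the sum is the maximum over the summands, and it remains to pick $r_0$ with $H_{r_0}$ of degree $m$. I take $r_0=\deg g_m$, for which $h_{r_0,m}\ne0$. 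This gives
\[
\nu(\langle QP^k\overline\theta\rangle)\ge r_0-d(t+1)-dm=-\deg Q-d(t+1).
\]
Since the valuation of the fractional part is a maximum it cannot be smaller than this summand's valuation, and it is $<0$ automatically. Exponentiating gives $|Q|\cdot\|QP^k\overline\theta\|\ge e^{-(t+1)d}$, as claimed.

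Finally, I need to double-check the degenerate cases. When $\langle H_{r_0}X^k\theta\rangle=0$, the hypothesis already excludes this for $\theta$, since the infimum is positive. Irreducibility of $P$ appears not to be needed beyond $d\ge1$.
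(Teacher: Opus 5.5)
Your argument is correct and complete. The paper gives no proof of this lemma to compare against: it is quoted from Robertson \cite{Robertson2} and used as a black box in the proof of Corollary~\ref{prop:hankeltransform-p(x)adic}.

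There is one slip to repair. Since $\deg Q=dm+r_0$, the equality $r_0-d(t+1)-dm=-\deg Q-d(t+1)$ is false unless $r_0=0$. What holds is $r_0-dm-d(t+1)\geq -dm-r_0-d(t+1)=-\deg Q-d(t+1)$, which is all you need. Your bound is in fact stronger by $2r_0$, and any $r$ with $H_r\neq 0$ would serve in place of the specific choice $r_0=\deg g_m$.

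Compared with the generating-matrix viewpoint the paper builds around this lemma, your route is more elementary and more general.

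\begin{itemize}
\item The paper treats only $P=X+a$. There it realizes $\mathcal{M}_{\overline\theta}$ as the congruence $(\mathcal{C}^{(1)})^{\mathsf T}\mathcal{M}_\theta\mathcal{C}^{(1)}$ by a Sobol'-type matrix, so that scrambling transfers the $(t,2)$-property. Even in that case it still invokes the lemma for the Diophantine inequality itself.
\item For $\deg P>1$ the analogous congruence only yields a pixel-Hankel matrix. The paper leaves open whether some $\mathcal{Q}$ with $\mathcal{Q}\mathcal{M}_\theta\mathcal{P}=\mathcal{M}_{\overline\theta}$ exists.
\item Your substitution $\phi\colon X\mapsto P(X)$ bypasses this. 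The decomposition $Q=\sum_{r<d}X^rH_r(P)$ says that $\FF_q[X]$ is free over $\FF_q[P]$ with basis $1,X,\ldots,X^{d-1}$. Separating the valuations of the $d$ components by their residues mod $d$ rules out cancellation uniformly in $d$. This also confirms that irreducibility of $P$ is superfluous.
\item As a bonus, $\phi$ is a field embedding that commutes with taking polynomial and fractional parts. It therefore immediately gives $\overline\theta=[0;A_1(P),A_2(P),\ldots]$. This extends the continued-fraction claim of Corollary~\ref{prop:hankeltransform-p(x)adic}, which the paper does not really justify, to every nonconstant $P$.
\item What the matrix approach offers in return is explicit generating matrices for the associated digital sequences, which your argument does not provide.
\end{itemize}
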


We now reinterpret Lemma~\ref{lem:robertson} from the perspective of generating matrices. The following Corollary~\ref{prop:hankeltransform-p(x)adic} builds on Example~\ref{examp:levin} and analyzes the corresponding generating matrices when replacing $p(X)=X$ with $p(X)=X+a$.

\begin{coro}\label{prop:hankeltransform-p(x)adic}
	Let $\theta\in\overline{\FF_q((X^{-1}))}$ satisfy 
	\begin{equation*}
		\inf_{Q\in\FF[X]\setminus\{0\},k\geq 0} |Q|\,\cdot\,\|QX^k \theta\|\geq e^{-t-1} \mbox{ with }t\in\NN_0.
	\end{equation*} 
	Then the matrices $\mathcal{M}_\theta$ and the identity matrix $\mathcal{I}$ in $\FF_q^{\NN_0\times\NN_0}$ generate a digital $(t,2)$-sequence over $\FF_q$. Let $\mathcal{C}^{(1)}\in\FF_q^{\NN_0\times\NN_0}$ be the matrix constructed by Definition \ref{def:sobol_construction} using $b_1(X)=X+a$, with $a\in\FF$ and $B_1=(1)$.. 
	Then the pair of matrices $\mathcal{M}_\theta\mathcal{C}^{(1)},\mathcal{I}\mathcal{C}^{(1)}$ as well as the pair $(\mathcal{C}^{(1)})^T\mathcal{M}_\theta\mathcal{C}^{(1)},\mathcal{I}\mathcal{C}^{(1)}$  generate a digital $(t,2)$-sequence over $\FF_q$. 

Furthermore $(\mathcal{C}^{(1)})^T\mathcal{M}_\theta\mathcal{C}^{(1)}$ is a Hankel matrix that equals $\mathcal{M}_{\overline{\theta}}$ where 
	  $\overline{\theta}$ is the counterexample to \eqref{eq:padicLittlewood_series} due to Lemma~\ref{lem:robertson} with $p(X)=X+a$, more exactly
	\begin{equation}\label{eq:Littlewood_series_2_CE2}
		\inf_{Q\in\FF[X]\setminus\{0\},k\geq 0} |Q|\,\cdot\,\|Q\cdot (X+a)^k \overline{\theta}\|\geq e^{-t-1}.
	\end{equation}
	
	Moreover, let $\theta=[0;A_1(X),A_2(X),\ldots]$ be the continued fraction of $\theta$ then $$\overline{\theta}=[0;A_1(X+a),A_2(X+a),\ldots].$$
\end{coro}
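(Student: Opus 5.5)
The plan is to establish the four assertions separately, in the order stated. Most of the work rests on Example~\ref{examp:levin}, Lemma~\ref{lem:scrambling}, Example~\ref{ex:vdc} and Lemma~\ref{lem:robertson}.

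\emph{Digital-sequence assertions.} The first assertion, that $\mathcal{M}_\theta$ and $\mathcal{I}$ generate a digital $(t,2)$-sequence, is exactly Example~\ref{examp:levin}. By Lemma~\ref{lem:inversematrices}, $\mathcal{C}^{(1)}$ is an NUT matrix. Applying Faure--Tezuka scrambling (second item of Lemma~\ref{lem:scrambling}) with $\mathcal{U}=\mathcal{C}^{(1)}$ shows that the pair $\mathcal{M}_\theta\mathcal{C}^{(1)},\mathcal{I}\mathcal{C}^{(1)}$ again generates a digital $(t,2)$-sequence. Next, $(\mathcal{C}^{(1)})^T$ is an NLT matrix, being the transpose of an NUT matrix. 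Applying Owen scrambling (first item of Lemma~\ref{lem:scrambling}) with $\mathcal{L}^{(1)}=(\mathcal{C}^{(1)})^T$ and $\mathcal{L}^{(2)}=\mathcal{I}$ then yields the claim for the pair $(\mathcal{C}^{(1)})^T\mathcal{M}_\theta\mathcal{C}^{(1)},\mathcal{I}\mathcal{C}^{(1)}$.

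\emph{Hankel structure and identification with $\mathcal{M}_{\overline\theta}$.} For $f\in\FF_q((X^{-1}))$, let $\operatorname{res}(f)$ denote the coefficient of $X^{-1}$ in $f$. By Definition~\ref{def:Hankelmatrix}, $m_{i,j}=c_{i+j+1}=\operatorname{res}(X^iX^j\theta)$. Hence for polynomials $f,g$ with coefficient vectors $\vec f,\vec g$ we have the bilinear-form identity
\[
\vec f^{\,T}\mathcal{M}_\theta\,\vec g=\operatorname{res}(fg\theta).
\]
By Example~\ref{ex:vdc}, the $j$th column of $\mathcal{C}^{(1)}$ is the coefficient vector of $(X-a)^j$, since $\sum_i\binom{j}{i}(-a)^{j-i}X^i=(X-a)^j$. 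Therefore
\[
\bigl((\mathcal{C}^{(1)})^T\mathcal{M}_\theta\mathcal{C}^{(1)}\bigr)_{i,j}=\operatorname{res}\bigl((X-a)^{i+j}\theta(X)\bigr).
\]
This depends only on $i+j$, so the product is Hankel.

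On the other hand, $\overline\theta=\sum_i c_i(X+a)^{-i}=\theta(X+a)$, and its $(k+1)$st coefficient is $\operatorname{res}(X^k\theta(X+a))$. The key step is translation invariance of the residue: $\operatorname{res}(h(X+a))=\operatorname{res}(h(X))$. This holds because $(X+a)^{n}$ contributes to $X^{-1}$ only for $n=-1$, and then with coefficient $1$. Applying it to $h(Y)=(Y-a)^k\theta(Y)$ gives
\[
\operatorname{res}\bigl(X^k\theta(X+a)\bigr)=\operatorname{res}\bigl((X-a)^k\theta(X)\bigr),
\]
so the two Hankel matrices coincide. Since $\deg(X+a)=1$, Lemma~\ref{lem:robertson} with $P(X)=X+a$ yields \eqref{eq:Littlewood_series_2_CE2}. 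Lemma~\ref{lem:robertson} is stated for irreducible $P(X)$, and $X+a$ is irreducible for every $a\in\FF_q$.

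\emph{Continued fraction.} The substitution $\sigma:X\mapsto X+a$ is a continuous field automorphism of $\FF_q((X^{-1}))$ that preserves degrees of polynomials and the absolute value. It therefore maps polynomial parts to polynomial parts and fractional parts to fractional parts. Applying $\sigma$ to each step $\theta_k=1/(A_{k+1}+\theta_{k+1})$ of the continued-fraction algorithm gives $\overline\theta=[0;A_1(X+a),A_2(X+a),\ldots]$.

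The step I expect to need the most care is the residue-translation identity together with the correct identification of the columns of $\mathcal{C}^{(1)}$ as $(X-a)^j$ rather than $(X+a)^j$. A sign slip at either point would give $\theta(X-a)$ instead of $\theta(X+a)$.
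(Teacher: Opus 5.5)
Your proposal is correct. The digital-sequence part is the paper's argument (Example~\ref{examp:levin}, then Faure--Tezuka scrambling by $\mathcal{C}^{(1)}$, then Owen scrambling by the NLT matrix $(\mathcal{C}^{(1)})^T$), with the transpose step made explicit. For the Hankel identity you take a different route.

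The paper proceeds in two steps:
\begin{itemize}
\item It writes the $(k,l)$ entry as $(1,0,\ldots)(\mathcal{Y}-a\mathcal{I})^k\mathcal{M}_\theta(\mathcal{X}-a\mathcal{I})^l(1,0,\ldots)^T$ with $\mathcal{Y}=\mathcal{X}^T$. It then shifts powers using the intertwining relation $b(\mathcal{Y})\mathcal{H}=\mathcal{H}b(\mathcal{X})$ for Hankel $\mathcal{H}$, which gives the Hankel property.
\item It identifies the product with $\mathcal{M}_{\overline\theta}$ by importing, from generalized Niederreiter theory, that the $k$th row of $\mathcal{C}^{(1)}$ holds the Laurent coefficients of $(X+a)^{-(k+1)}$. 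Hence the zeroth row of $\mathcal{M}_\theta\mathcal{C}^{(1)}$ is the coefficient sequence of $\sum_i c_i(X+a)^{-i}$, and two Hankel matrices with equal zeroth rows coincide.
\end{itemize}

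Your pairing $\vec f^{\,T}\mathcal{M}_\theta\vec g=\operatorname{res}(fg\theta)$ is the conceptual content of that intertwining relation. Combined with translation invariance of the residue, it gives both the Hankel property and the full entrywise identification in one computation. It also avoids the rowwise description of $\mathcal{C}^{(1)}$, which the paper does not prove. The paper's matrix phrasing is what it reuses in the later discussion of $\deg b_1>1$, but your pairing covers that case too: the entries of $\mathcal{T}^T\mathcal{M}_\theta\mathcal{T}$ are $\operatorname{res}(b_1^{i+j}\theta)$.

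The paper merely asserts the continued-fraction claim, whereas your automorphism $\sigma\colon X\mapsto X+a$ actually proves it. The same $\sigma$ is an isometry that maps $\FF_q[X]$ onto itself and commutes with taking fractional parts. It therefore also yields \eqref{eq:Littlewood_series_2_CE2} directly, with equality of the two infima, so Lemma~\ref{lem:robertson} is not needed in the linear case.
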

\begin{proof}
	The first part follows immediately from Example~\ref{examp:levin},  Lemma~\ref{lem:scrambling} and Lemma~\ref{lem:inversematrices}. 
	
	For the second note that the proof of Theorem~\ref{thm:sobol} together with Example~\ref{ex:vdc} and the fact that $\deg(X+a)=1$ shows that both $\mathcal{C}^{(1)}$ and $\mathcal{C}^{(-1)}$
	 can be constructed columnswise, with $c^{(i)}_0=c^{(-i)}_0=(1,0,0,0,\ldots)^T$ and the $r$th columns for $r\in\NN$ obtained via $$c^{(-i)}_r=(\mathcal{X}+a\mathcal{I})^r(1,0,0,0,\ldots)^T\mbox{ and } c^{(i)}_r=(\mathcal{X}-a\mathcal{I})^r(1,0,0,0,\ldots)^T,$$
	 where $\mathcal{X}=\begin{pmatrix}
	 	0\\ \mathcal{I}
	 \end{pmatrix}$. We define $\mathcal{X}^T=\begin{pmatrix}
	 0&\mathcal{I}
	 \end{pmatrix}=:\mathcal{Y}$ and observe for any Hankelmatrix $\mathcal{H}\in\FF_q^{\NN_0\times\NN_0}$ that 
	 \begin{equation}\label{eq:hankel_X_Y}
	 b(\mathcal{Y})\cdot	\mathcal{H}=\mathcal{H}\cdot	b(\mathcal{X})
	 \end{equation}
	 for any polynomial $b(X)\in\FF_q[X]$. 
	 
	 The Hankel property of $(\mathcal{C}^{(1)})^T\mathcal{M}_\theta\mathcal{C}^{(1)}=:\overline{\mathcal{C}}=(c_{k,l})_{k,l\geq 0}$ follows then immediately from 
	 \begin{align*}
	 	c_{k,l}&=(1,0,0,0,\ldots)(\mathcal{Y}-a\mathcal{I})^k \mathcal{M}_\theta (\mathcal{X}-a\mathcal{I})^l (1,0,0,0,\ldots)^T\\
	 	&=(1,0,0,0,\ldots)(\mathcal{Y}-a\mathcal{I})^{k+s} \mathcal{M}_\theta (\mathcal{X}-a\mathcal{I})^{l-s} (1,0,0,0,\ldots)^T=	c_{k+s,l-s}
	 \end{align*}
	 for all $s, l,k\in\NN_0$ with $s\leq l$, where we applied \eqref{eq:hankel_X_Y} in the second equality. 
	 
	 From the construction of the generalized Niederreiter sequences (cf. e.g. \cite{niesiam} or \cite{Tez2}) we know that 
	 that both $\mathcal{C}^{(1)}$ and $\mathcal{C}^{(-1)}$ can be constructed row-wise where the $k$th rows  $\rho_k^{(1)}$ and $\rho_k^{(-1)}$ resp. are determined by the coefficients of the Laurentseries of $\frac{1}{(X+a)^{k+1}}$ and $\frac{1}{(X-a)^{k+1}}$.
	 The first row of $\mathcal{M}_\theta$ is determined by the coefficients of $\theta=\theta(X)=\sum_{i=1}^{\infty}a_iX^{-i}$. Therefore we see the first row of $(\mathcal{C}^{(1)})^T\mathcal{M}_\theta\mathcal{C}^{(1)}=:\mathcal{M}_{\overline{\theta}}$ is determined by the coefficients of $$\overline{\theta}=\overline{\theta}(X)=\sum_{i=1}^{\infty}a_i(X+a)^{-i}=\theta(X+a).$$ 
	 This together with Lemma~\ref{lem:robertson} ensures \eqref{eq:Littlewood_series_2_CE2} and the statement of the continued fraction of $\overline{\theta}$. 
\end{proof}	
The part of the proof of Corollary~\ref{prop:hankeltransform-p(x)adic} that relates $\theta(X)$ with $\overline{\theta}(X)$ together with the digital point of view in Example~\ref{examp:levin} allows a slight extension of Lemma~\ref{lem:robertson}. 
 
\begin{remark}\label{rem:5}
		Note that Corollary~\ref{prop:hankeltransform-p(x)adic} only covers the case where $p(X)$ in Lemma~\ref{lem:robertson} is a linear polynomial $p(X)=X+a$. The identity $(\mathcal{C}^{(1)})^T\mathcal{M}_\theta\mathcal{C}^{(1)}=\mathcal{M}_{\overline{\theta}}$ can also be explained via a binomial transforms of Hankel matrices. 

In the general case of any irreducible $p(X)$ we can only link the first row of $M_{\overline{\theta}}$ due to the counterexample in Lemma~\ref{lem:robertson} with the first row of a matrix $M_{{\theta}}\mathcal{P}$, where the $k$th row  $\rho_k$ of $\mathcal{P}$, $k\in\NN_0$, is determined by the coefficients of the Laurent series of $\frac{1}{(p(X))^{k+1}}$. 

	Let $\theta=\sum_{i=1}^\infty c_iX^{-i}\in\FF_q((X^{-1}))$ and $t \in\NN_0$ such that \begin{equation*}
	\inf_{Q\in\FF[X]\setminus\{0\},k\geq 0} |Q|\,\cdot\,\|QX^k \theta\|\geq e^{-(t+1)} \mbox{ with }t\in\NN_0.
\end{equation*}
Since for any $l\in\NN_0$, trivially $\theta^{(l)}=\sum_{i=1}^\infty c_{i+l}X^{-i}\in\FF_q((X^{-1}))$ satisfies
\begin{equation*}
	\inf_{Q\in\FF[X]\setminus\{0\},k\geq 0} |Q|\,\cdot\,\|QX^{k} \theta^{(l)}\|\geq	\inf_{Q\in\FF[X]\setminus\{0\},k\geq 0} |Q|\,\cdot\,\|QX^k \theta\|\geq e^{-(t+1)}.
\end{equation*}
We know any $\overline{\theta}^{(l)}:=\sum_{i=1}^\infty c_{i+l}(P(X))^{-i}$ satisfies \begin{equation*}
	\inf_{Q\in\FF[X]\setminus\{0\},k\geq 0} |Q|\,\cdot\,\|QP(X)^k \overline{\theta}^{(l)}\|\geq e^{-(t+1)\deg(P(X))}.
\end{equation*}
Therefore not only the first row in $M_{{\theta}}\mathcal{P}$ but any arbitrary row can be identified with a counterexample to \eqref{eq:padicLittlewood_series}.
	
\end{remark}

An interesting research question is the following that uses the notation in Remark~\ref{rem:5}:
\begin{RQ}
 Is it possible to find a matrix $\mathcal{Q}$ such that 
	$$\mathcal{Q}M_{{\theta}}\mathcal{P}=M_{\overline{\theta}}.$$ Note that this would generalize in a certain sense the binomial transform of a Hankel matrix. 
\end{RQ}	
Where does a generalization of the ideas of proof of Proposition \ref{prop:hankeltransform-p(x)adic} fail when switching from a linear polynomial $b_1(X)=X+a$ to a polynomial $b_1(X)$ with $\deg(b_1(X))=e>1$?
	Note that from the proof of Theorem~\ref{thm:sobol} we can derive the following columnwise construction of $\mathcal{C}^{(-1)}$ for any $b_1(X)\in\FF_q[X]$ with $\deg(b_1(X))=e\geq 1$ and $B_1$ the $e\times e$ identity matrix. We denote the $j$th column of $\mathcal{C}^{(-1)}$ as $c^{(-1)}_{j}$. We write $j=e\cdot k+l$ with $k,l\geq 0$ and $l\in\{0,1,\ldots,e-1\}$. Then
	
	$$c^{(-1)}_{e k+l}=b_1(\mathcal{X})^k\mathcal{X}^l (1,0,0,0,\cdots)^T.$$ 

	Now completely analogously as in the proof of Proposition~\ref{prop:hankeltransform-p(x)adic} we obtain for any Hankel matrix $\mathcal{H}$ via
	$$\mathcal{H}b_1(\mathcal{X})^k=b_1(\mathcal{Y})^k\mathcal{H}.$$
	Thus, we do indeed get that that $(\mathcal{C}^{(-1)})^T\mathcal{H}\mathcal{C}^{(-1)}=:\overline{\mathcal{H}}=(\overline{h}_{i,j})_{i,j\geq 0}$ is a \emph{$e\times e$ pixel-Hankel matrix}, that satisfies $\overline{h}_{i,j}=\overline{h}_{i+e\cdot s,j-e \cdot s}$ for all  $i,j,s\geq 0$ such that $j-e \cdot s\geq 0$. But whenever $e>1$ it is not a Hankel matrix in the strict sense. 
	
	A second approach is to construct a matrix $\mathcal{T}$ with $j$th column $t_j$ defined by	$$t_{j}=b_1(\mathcal{X})^j t_0$$	with $t_{0}=(1,0,0,0,\cdots)^T $or any other initial setting. 
	Then $(\mathcal{T})^T\mathcal{H}\mathcal{T}$ remains a Hankel matrix, but the rank-structure is not preserved.

In the theory of uniform distribution one is interested in finding new examples of generating matrices for $(t,s)$-sequences in the sense of Niederreiter. Example~\ref{examp:levin} was first suggested by Levin~\cite{levinLDhybrid}. 
We would like to point out that Example~\ref{examp:levin} shows for any counterexample $\theta$ satisfying \eqref{eq:Littlewood_series_2_CE}, that the two matrices $\mathcal{M}_\theta$ and $\mathcal{I}$ construct a $(t,2)$-sequence.  Using Theorem~\ref{prop:1}, Lemma~\ref{lem:inver_U_theta}, together with Lemma~\ref{lem:scrambling} we see that also the two matrices $\mathcal{I}_\theta$ and $\mathcal{V}_\theta$ due to Lemma~\ref{lem:inver_U_theta} construct a $(t,2)$-sequence. This observation raises the following research question: 

\begin{RQ}
	An interesting problem is to determine, for a given NUT matrix $\mathcal{V}\in\FF_q^{\NN_0\times\NN_0}$ such that $\mathcal{V}$ and $\mathcal{I}$ generate a $(t,s)$-sequence over $\FF_q$, sufficient or necessary conditions under which $\mathcal{V}=\mathcal{V}_\theta$ for some $\theta\in\FF_q((X^{-1}))$.
\end{RQ}
The question above is related to describing the set of all possible counterexamples to the $X$-adic Littlewood conjecture.
A natural question is whether this perspective can provide new insights into the $X$-adic Littlewood conjecture over $\FF_2$, which remains unresolved. In particular, can the study of generating matrices yield information in this case? We note that NUT generating matrices of $(0,2)$-sequences over $\FF_2$ have been completely characterized; see \cite[Proposition~4]{HoferLarcher2010}.

One possible initial approach to \eqref{eq:Littlewood_series} is to start with a counterexample $\theta$ to the $X$-adic Littlewood conjecture and ask whether it is possible to construct from it a counterexample to the Littlewood conjecture over $\FF_q((X^{-1}))$.
In other words, one may ask the following.

\begin{RQ}
	Is it possible to find $\theta,\phi\in\FF_q((X^{-1}))$ such that $\theta$ satisfies \eqref{eq:Littlewood_series_2_CE} and there exists an NLT matrix $\mathcal{Q}$, or a related transformation, for which
	\[
	\mathcal{Q}\mathcal{M}_{\theta}\mathcal{U}_{\phi}
	\]
	is again a Hankel matrix $\mathcal{M}_{\xi}$? If so, then $(\xi,\phi)$ would be a candidate for a counterexample to \eqref{eq:Littlewood_series}.
\end{RQ}	

For example, suppose that $\phi=[0;X,X,X,\ldots]$. Then, by Definition~\ref{def:mat_RandI}, we have
\[
\mathcal{I}_\phi=\mathcal{I}
\qquad\text{and}\qquad
\mathcal{R}_\phi=\overline{\mathcal{R}}_\phi=\mathcal{X}-\mathcal{Y}.
\]
Consequently, Definition~\ref{def:mat_UandL} yields a columnwise construction of $\mathcal{U}_\phi$, whose $j$th column is given by
\[
u_j=(\mathcal{X}-\mathcal{Y})^j(1,0,0,0,\ldots)^{\mathsf T}.
\]
Unfortunately, although
\[
\mathcal{Y}\mathcal{H}=\mathcal{H}\mathcal{X}
\]
holds for every Hankel matrix $\mathcal{H}$, the relation
\[
\mathcal{X}\mathcal{H}=\mathcal{H}\mathcal{Y}
\]
does not hold for any nonzero Hankel matrix $\mathcal{H}$. Therefore,
\[
\mathcal{U}_\phi^{\mathsf T}\mathcal{H}\mathcal{U}_\phi
\]
is symmetric, but it is not necessarily a Hankel matrix.

\section{AI Declaration}
The mathematical content of this work was developed without the use of artificial intelligence tools and is solely the work of the author. Artificial intelligence tools were used only to help present proof ideas and polish the language.

\section*{Appendix}

In what follows, we list some analogies between $\RR$ and $\FF((X^{-1}))$ that are useful in this paper.  
\begin{enumerate}

	\item Every real number $\alpha \in [0,1)$ can be expressed as a continued fraction $[0;a_1,a_2,a_3,\ldots]$ with $a_i\in\NN$ for every $i\in\NN$. \\
	
	Analogously, every series $L\in\overline{\FF((X^{-1}))}$ can be expressed as a continued fraction 
	$$[0;A_1(X),A_2(X),A_3(X),\ldots]=\cfrac{1}{A_1(X)+\cfrac{1}{A_2(X)+\cfrac{1}{A_3(X)+\ddots} }}$$
	with $A_i(X)\in\FF[X]$ satisfying $\deg(A_i(X))\geq 1$ for every $i\in\NN$.
	

\item The $n$th convergent of a non-rational $\alpha\in[0,1)$ is the finite continued fraction $[0;a_1,a_2,\ldots,a_n]$, which can be written as $p_n/q_n$ with $p_n,q_n \in\NN_0$. The sequences $(p_n)_{n\geq 0}$ and $(q_n)_{n\geq 0}$ are computed recursively according to
$q_n=a_nq_{n-1}+q_{n-2}$ and $p_n=a_np_{n-1}+p_{n-2}$ for every $n\in\NN$, with initial values $q_{-1}=0,\,p_{-1}=1,\,q_0=1$, and $p_0=0$.  \\
Analogously, the $n$th convergent of a non-rational $L\in\overline{\FF((X^{-1}))}$ is the finite continued fraction $[0;A_1(X),A_2(X),\ldots,A_n(X)]$, which can be written as $P_n(X)/Q_n(X)$ with $P_n(X),Q_n(X) \in\FF[X]$. The sequences $(P_n(X))_{n\geq 0}$ and $(Q_n(X))_{n\geq 0}$ are computed recursively according to
$Q_n(X)=A_n(X)Q_{n-1}(X)+Q_{n-2}(X)$ and $P_n(X)=A_n(X)P_{n-1}(X)+P_{n-2}(X)$ for every $n\in\NN$, with initial values $Q_{-1}(X)=0,\,P_{-1}(X)=1,\,Q_0(X)=1$, and $P_0(X)=0$.  

\item An interesting fact is that the denominators $q_n$ and $Q_n(X)$ provide best approximations in the following sense: if 
$$|\alpha-p/q|<1/|q|^2,$$
then $p/q$ is a convergent to $\alpha$, and if 
$$\nu(L-P(X)/Q(X))<2\nu\left( \frac{1}{Q(X)}\right)=-2 \deg(Q(X)),$$
then $P(X)/Q(X)$ is a convergent to $L$. 

Moreover, we have $\deg(Q_n)=\sum_{i=1}^n\deg(A_i)=:d_n$, $\nu(L-P_n(X)/Q_n(X))=-d_n-d_{n+1}$, and
$$\nu(L-v(X)/w(X))\geq \nu(L-P_n(X)/Q_n(X))$$
for all $v(X),w(X)\in\FF[X]$ satisfying $0\leq \deg(w(X))< d_{n+1}$.


\item Finally, we point out two further analogies between $\RR$ and the set of formal Laurent series $\FF((X^{-1}))$ when $\FF$ is a finite field. 
We have the following two classifications of $\alpha\in\RR$:
\begin{itemize}
	\item[-] $\alpha$ is a rational number if and only if, for every integer $b>1$, the $b$-adic expansion of $\alpha$ is eventually periodic or finite. 
	\item[-] $\alpha$ is a quadratic irrational number if and only if its continued-fraction expansion is infinite and ultimately periodic. 
\end{itemize}
When the field $\FF$ is finite, the condition $L\in\FF(X)$ for $L\in\FF((X^{-1}))$ is equivalent to the condition that the series expansion $\sum_{i=u}^\infty c_iX^{-i}$ is eventually periodic or finite. (See \cite[Theorem~1.1]{lasj2000}.)

Moreover, if $\FF$ is finite and $L\in\FF((X^{-1}))\setminus \FF(X)$, then the continued-fraction expansion of $L$ is ultimately periodic if and only if $L$ is quadratic over $\FF(X)$ (cf. e.g. \cite[Theorem~3.1]{lasj2000}). 

For results and discussions in the case where $\FF$ is not finite, see, for example, \cite{schmidt2000}. 
\end{enumerate}

For further information on continued fractions in power-series fields, we refer the reader to \cite{lasj2000} and \cite{schmidt2000}.

\begin{small}
\noindent{\scshape{Authors' Address:}}\\

\noindent Roswitha Hofer:\\
Institute of Financial Mathematics and Applied Number Theory,\\
University of Linz\\
Altenbergerstr.~69\\
A-4040 Linz\\
Austria\\
E-mail: \texttt{roswitha.hofer@jku.at}
\end{small}

\end{document}